\newcommand{\refcheckize}[1]{%
  \expandafter\let\csname @@\string#1\endcsname#1%
  \expandafter\DeclareRobustCommand\csname relax\string#1\endcsname[1]{%
    \csname @@\string#1\endcsname{##1}\wrtusdrf{##1}}%
  \expandafter\let\expandafter#1\csname relax\string#1\endcsname
}
\renewcommand{\mid}{\;:\;}
\DeclareMathOperator{\R}{\mathbb{R}}
\DeclareMathOperator{\bbS}{\mathbb{S}}
\DeclareMathOperator{\N}{\mathbb{N}}
\DeclareMathOperator{\eps}{\varepsilon}
\DeclareMathOperator{\ubd}{\overline{\dim}_{\mathrm{B}}}
\DeclareMathOperator{\umbd}{\overline{\dim}_{\mathrm{MB}}}
\DeclareMathOperator{\lbd}{\underline{\dim}_{\mathrm{B}}}
\DeclareMathOperator{\bdd}{\dim_{\mathrm{B}}}
\DeclareMathOperator{\argmin}{argmin}
\DeclareMathOperator{\cT}{\mathcal{T}}
\DeclareMathOperator{\cL}{\mathcal{L}}
\DeclareMathOperator{\Homeo}{Homeo}
\DeclareMathOperator{\diam}{diam}
\DeclareMathOperator{\id}{id}
\DeclareMathOperator{\cl}{cl}
\newtheorem{theorem}{Theorem}[section]
\newtheorem{lemma}[theorem]{Lemma}
\newtheorem{proposition}[theorem]{Proposition}
\newtheorem{corollary}[theorem]{Corollary}
\theoremstyle{definition}
\newtheorem{definition}[theorem]{Definition}
\newtheorem{example}[theorem]{Example}
\numberwithin{equation}{section}
\title{On continuum real trees of circle maps and their graphs}
\author[$\star$]{Maik Gr\"oger}
\author[$\dagger$]{Sascha Troscheit}
\affil[$\star$]{\small Faculty of Mathematics and Computer Science,
  Jagiellonian University in Krak\'ow, Poland.

Email: maik.groeger@im.uj.edu.pl\vspace{1em}}
\affil[$\dagger$]{\small Mathematical Sciences Research Unit, University of Oulu, PO Box 8000, 90014 Oulu,
  Finland.

Email: arxiv@troscheit.eu}
\begin{document}

\renewcommand*{\thefootnote}{\fnsymbol{footnote}}

\maketitle
\begin{abstract}
  The Brownian continuum tree was extensively studied in the 90s as a universal random metric space.
  One construction obtains the continuum tree by a change of metric from an excursion function (or continuous
  circle mapping) on $[0,1]$.
  This change of metric can be applied to all excursion functions, and generally to continuous circle mappings.
  In 2008, Picard proved that the dimension theory of the tree is connected to its associated
  contour function:
  the upper box dimension of the continuum tree coincides with the variation index of the contour function.

  In this article we give a short and direct proof of Picard's theorem through the study of packings.
  We develop related and equivalent notions of variations and variation indices and study their basic properties.
  Finally, we link the dimension theory of the tree with the dimension theory of the graph of its
  contour function.
  \footnotetext[1]{The research leading to these results has received funding from the Norwegian
Financial Mechanism 2014-2021 via the POLS grant no.~2020/37/K/ST1/02770.}
  \footnotetext[2]{Research initially supported by
    FWF (Austrian Science Fund) Meitner grant M-2813 and later by the European Research Council
  Marie Sk\l{}odowska-Curie Personal Fellowship \#101064701.}
\end{abstract}

\renewcommand*{\thefootnote}{\arabic{footnote}}


\section{Introduction}
Let $f:[0,1]\to [0,\infty)$ be a continuous map satisfying
$f(0)=f(1)=0$ and $f(x)\geq0$ for all $x\in [0,1]$.
We say that $f$ is an \emph{excursion function} and we can associate with it a real tree through a
change of metric.
In the context of a real tree, it is also known as a \emph{contour function} of the tree.
Define the pseudometric $d_f$ by
\[
  d_f(x,y)  = f(x)+f(y) - 2\min_{z\in[x,y]}f(z)
\]
and $d_f(y,x)=d_f(x,y)$ on all $x\leq y$ in $[0,1]$. 
This gives rise to a bona fide metric space $([0,1]_\sim, d_f)$ by identifying points in $[0,1]$ respective to $d_f$.
As a convention, we will denote the representative of the equivalency class $[x]$ for $x\in [0,1]$
by $\max\left\{ y\in[0,1] \mid y\in [x] \right\}$.
It is not too hard to see that the resulting metric space is a real tree by noticing that for all
$x\in [0,1]$ the path $\gamma_x(t)
= \max \left\{ s \in[0, x] \mid f(s) = t   \right\}$
is a geodesic from the root $0\sim 1$ to $x$ in $[0,1]_\sim$ and that for every $x,y\in [0,1]$ the
greatest common ancestor is (in the equivalency class of) $z=\argmin_{s\in[x,y]}f(s)$ and both geodesics $\gamma_x,\gamma_y$
traverse through (the equivalency class of) $z$, i.e.\ $\gamma_x(f(z)) \sim \gamma_y(f(z))$.
We shall refer to these tree spaces by $\cT_f$, see \cref{fig:smoothtree} for an illustrative example.
We note here that it is not necessary to stipulate $f(x)>0$ for all $x\in(0,1)$. In fact, we may
extend the family of functions under consideration to any continuous (real) functions on the circle
$[0,1]_{0\sim 1} = \bbS^1$, as a suitable translation makes it an excursion with potentially
multiple zeros. We will assume throughout that our circle mappings are non-constant to
avoid trivialities. We also remark that in many cases excursion functions are defined to be positive
in the interior of $(0,1)$. This avoids the tree splitting at the root node but we will not make
this assumption to work in slightly higher generality.
\begin{figure}[t]
  \begin{center}
  \begin{tikzpicture}
    \node at (0,0) {\includegraphics[width=.5\textwidth]{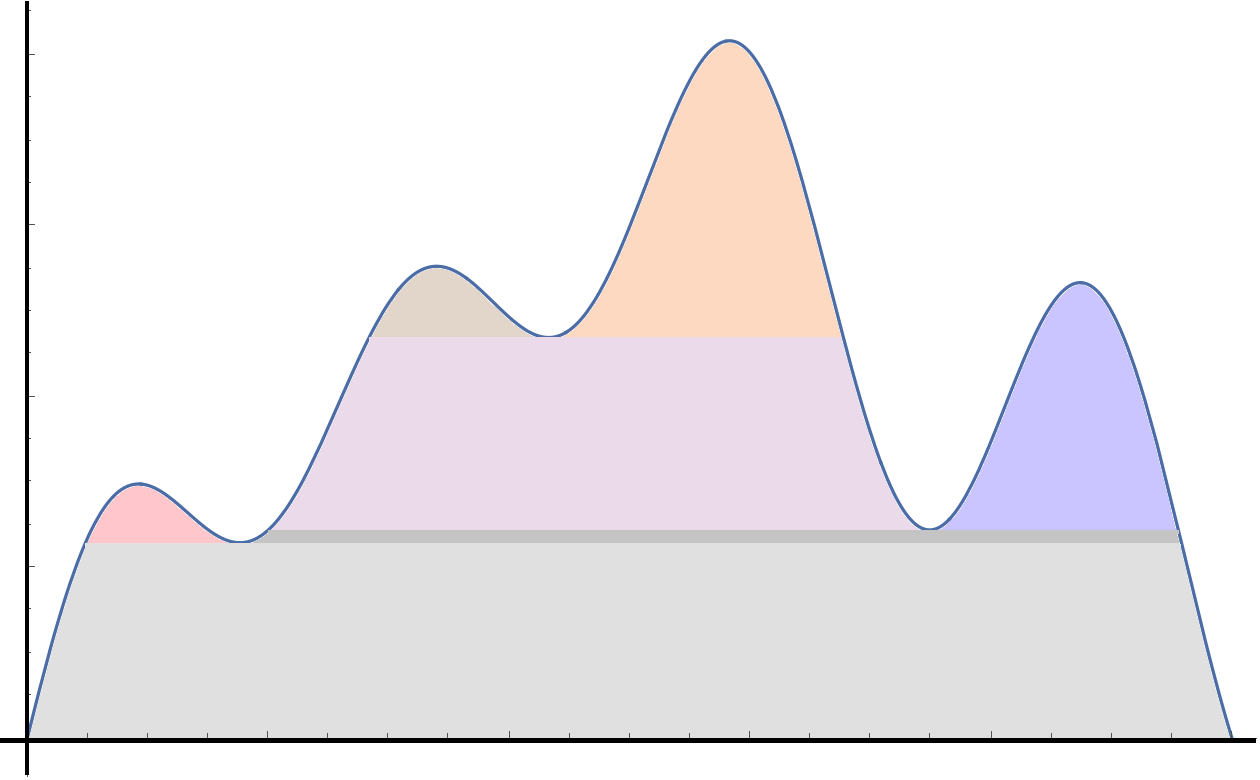}};
    \node at (1.7,1) {$f(x)$};
    \node at (5,0) {$\longmapsto$};
    \node at (5,-0.3) {$\pi$};
    \node at (-3.7,-2.3) {$0$};
    \node at (3.6,-2.3) {$1$};
    \node at (8,0) {\includegraphics[width=0.2\textwidth]{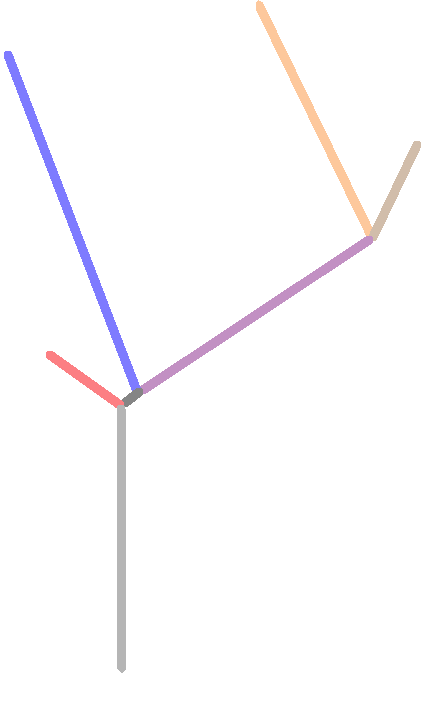}};
    \node at (8,-1.5) {$\cT(f)$};
  \end{tikzpicture}
\end{center}
  \caption{The tree space associated with a very smooth function. The identified regions in the
  tree are captured by colour coding.}
  \label{fig:smoothtree}
\end{figure}

Real trees appear in a variety of contexts such as probability theory and geometric group theory.
Consider a Brownian excursion, that is a Gaussian processes on $[0,1]$ conditioned
on being an excursion function.
The resulting random tree is known as the Brownian continuum random tree (CRT) and 
is an important universal random metric space, see \cref{fig:bcrt}.
The CRT appears in a multitude of settings and has several natural and equivalent definitions that were
first explored by Aldous in the 1990s \cite{ Aldous91a, Aldous91,Aldous93}.
Picard \cite{Picard08} noticed that the $p$-variation of the contour function $f$ is
linked to the dimension theory of the tree: The variation index of $f$ coincides with the upper
box dimension of the tree. 
Since the variation index is indicative of the H\"older exponent of $f$ (up to time change), this
links analytical properties of the function with the dimension theory of the tree.

As such the tree representation of continuous functions is an important tool in the study of their 
analytical properties, see \cite{Broutin21} and references therein. 
Interpreting this correspondence slightly liberally, we may think of the tree space as a 
``dual space''.
However, strictly speaking this is not true.
Notice that for any homeomorphism 
$\tau :\bbS^1 \to\bbS^1$ and continuous circle mapping
$f:\bbS^1\to \R$ we have $\cT_f = \cT_{f\circ\tau}$. We shall refer to such homeomorphisms as
\emph{time changes}. This is not the only way that two distinct functions can have the same
associated tree space as particular permutations can have the same effect. 

\paragraph{Organisation.}
The goal in this article is to explore the dimension theory of tree spaces more thoroughly.
We will start by giving definitions in \cref{sect:definitions}
and provide a new short proof of Picard's theorem using basic
notions in dimension theory in \cref{sect:picard}.
Crucially, our proof avoids the use of setting up integration along the tree and avoids averaging,
or truncation, of the fine detail of the tree directly.

\smallskip
Our new proof of Picard's theorem  motivates us to define new concepts of variations that correspond to 
other dimension-theoretic notions.
This is done in \cref{sect:morevariations}, where we 
define a discretised version of the $p$-variation $V_r^p(f)$. Taking the upper and lower limit gives
two new notions of variation that we coin the upper and lower variation content of $f$, denoted by
$\overline{V}^p(f)$ and $\underline{V}^p(f)$, respectively.
Each of these notions comes with its own notion of variation index that more closely resembles
dimensions defined through measures or contents. 

We show that the upper variation content is bounded above by the variation, $\overline{V}^p(f) <
V^p(f)$, but that its indices coincide. Thus, the upper variation content is a finer measure of
smoothness as there exist functions for which $0<\overline{V}^p(f)< V^p(f) = \infty$.
The notions of upper and lower variation index give a direct correspondence to the upper and lower
box dimension of the associate real tree as we will show in \cref{thm:comparable}. In fact,
\cref{thm:comparable} establishes a direct link to packings in the associated real tree, with
\cref{thm:doublingComparable} showing that for trees that are doubling metric spaces, the
discretised variation $V_r^p(f)$ and $N_r(\cT(f))$ are comparable.


\smallskip
In \cref{sect:shmerkin}, we will explore the link between the upper box dimension of
the graph $\Gamma_f = \left\{ (x,f(x)) : x\in \bbS^1 \right\}$
of $f:\bbS^1 \to \R$ and the upper box dimension of $\cT_f$, which is equal to
the variation index of $f$.
The following inequality holds for all time changes $\kappa$:
\[
  \ubd \Gamma_{f\circ \kappa}\leq 2-\frac{1}{\ubd\cT_f}
\]
and follows from Picard's theorem and
the well-known fact that any $\alpha$-H\"older function $f$ satisfies $\ubd\Gamma(f) \leq 2-\alpha$,
see \cite[Corollary 11.2]{FalconerBible}
Inspired by a question of P.\ Shmerkin, we will prove a ``variational principle''
and show that the upper bound is achieved by some time changes $\tau$.
That is, for all $f:\bbS^1\to\R$ there exists time change $\tau$ such that 
\[
  \ubd \Gamma_{f\circ \tau}= 2-\frac{1}{\ubd\cT_f}.
\]
We will further show that there exists a time change $\underline\tau$ for which the
modified upper box dimension provides a lower bound:
\[
  \ubd \Gamma_{f\circ\underline\tau}\geq 2-\frac{1}{\umbd\cT_f}.
\]
However, it is not true that this holds for any time change, and hence does not give a variational
principle as for the upper bound. 

\smallskip
Finally, in \cref{sect:examples}, we will construct several examples of circle mappings that illustrate results throughout
the article.

\begin{figure}[t]
  \begin{center}
    \begin{tikzpicture}
    \node at (0,0) {\includegraphics[width=.5\textwidth]{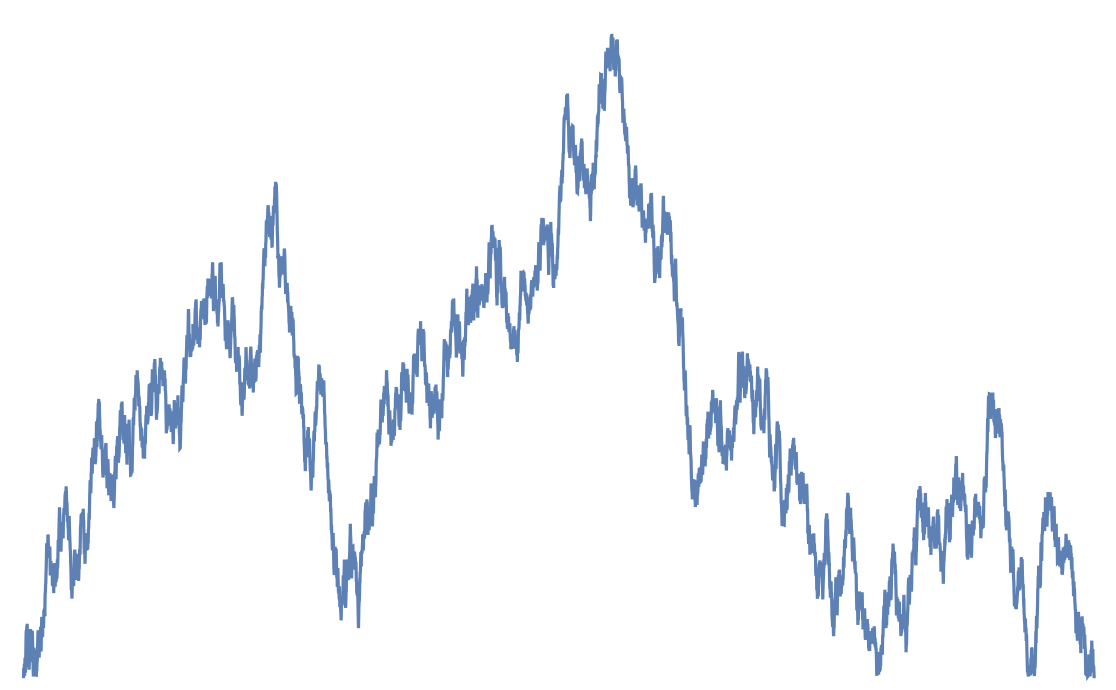}};
      \node at (1.5,1) {$W(x)$};
      \node at (4.5,0) {$\longmapsto$};
      \node at (4.5,-0.3) {$\pi$};
      \draw[->]  (-3.55,-2.3) -- (-3.55,2.2);
      \draw[->] (-3.65,-2.2) -- (3.9,-2.2);
      \draw (3.55,-2.2) -- (3.55,-2.3);
      \node at (-3.7,-2.45) {$0$};
      \node at (3.45,-2.45) {$1$};
      \node at (8,0) {\includegraphics[width=0.4\textwidth]{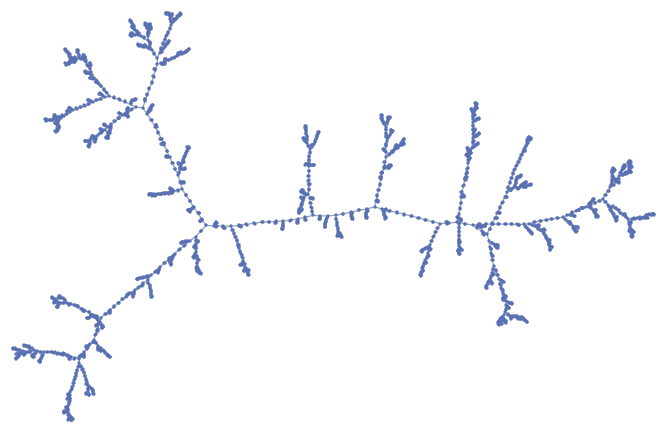}};
      \node at (7,-1.3) {$\cT(W)$};
    \end{tikzpicture}
  \end{center}
  \caption{A Brownian excursion realizing the Brownian continuum random tree.}
  \label{fig:bcrt}
\end{figure}

\paragraph{Acknowledgements.}
The authors wish to express their gratitude to Daniel Meyer, Tony Samuel, and Pablo Shmerkin with whom some
of the questions in this article were extensively discussed. 
\cref{sect:shmerkin} in particular was inspired by a question of Pablo Shmerkin.
The authors further wish to thank Steffen Rohde, Mariusz Urba\'nski, and Meng Wu for interesting
conversations and talks related to this topic.


\section{Definitions: Real trees of circle mappings}
\label{sect:definitions}
The real tree of a function is defined through excursion functions.
\begin{definition}
  An \textbf{excursion function} is any real-valued, continuous function $f$ on $[0,1]$, such that
  $f(0)=0=f(1)$ with $f(x) \geq 0$ for all $x\in[0,1]$. 
\end{definition}
By changing the metric on $[0,1]$ we 
obtain a real rooted tree from every excursion function.
\begin{definition}
  Let $f:[0,1]\to[0,\infty)$ be an excursion function. Then, for all $0\leq x \leq y \leq 1$, we set
  \[
    d_f(x,y) = f(x)+f(y) - 2\min_{z\in[x,y]}f(z)
    \quad\text{and}\quad
    d_f(y,x) = d_f (x,y).
  \]
  The function $d_f:[0,1]^2\to [0,\infty)$ is a pseudometric and, identifying equivalence
  classes, $\cT(f) = ([0,1]/_\sim, d_f)$ is a metric space called the \textbf{real (rooted) tree} of
  $f$.
\end{definition}
We will write $\pi:[0,1]\to[0,1]/_\sim$ for the map that projects the Euclidean space $([0,1],|.|)$
into the real tree space $\cT(f)$.
Note that restricting to excursion functions is somewhat arbitrary. 
We can associate a real
tree to any continuous function $f\in C(\bbS^1)$ on the circle $\bbS^1$ by translating the function to
$\widetilde{f}(x)=f(x-a)+b$ with $a=\argmin_x f(x)$ and $b=f(a)$ such that 
$0=\widetilde{f}(0)=\widetilde{f}(1)\leq \widetilde{f}(x)$ for all
$x\in\bbS^1$.
This extension still captures all relevant analytical properties that we are interested in.
\begin{definition}
  The real tree $\cT(f)$ of a continuous circle mapping $f:\bbS^1\to\bbS^1$ is the real
  tree of the excursion function $\widetilde{f}:\bbS^1\to [0,\infty)$ given by $x\mapsto f(x-a)+f(a)$,
  where $a=\argmin_x f(x)$.
\end{definition}
In the remainder of this article we may freely switch between the notions of continuous circle
mappings and excursions through this identification.

\paragraph{Remark.}
The projection $\pi$ resulting from the change of metric above is a mapping from the continuous circle mappings
$f\in C(\bbS^1)$ onto the space of bounded real trees.
Here we understand $\bbS^1 = [0,1]/_\sim$ where $0\sim 1$.
This projection fails to be injective. Indeed, it is not difficult to see that 
for any homeomorphism $\tau\in\Homeo(\bbS^1)$ we have
$\pi(f) = \pi(f\circ\tau)$.
However, we may quotient out functions giving the same real tree and consider equivalency
classes.


\subsection{Bounded variation and \texorpdfstring{$p$}{p}-variation}
The $p$-variations, and the variation in particular, are an important collection of seminorms
of real-valued functions, see for example the text book \cite{LeoniBook} on its use in functional
analysis as well as \cite{Chistyakov04, Chistyakov98, Porter05}.
This family of variations can be considered as a measure of regularity and are linked to the H\"older continuity
of functions. 
\begin{definition}
  Let $p\geq 1$. The \textbf{$p$-variation} of a map $f:[0,1]\to \R$ is given by 
  \[
    \|f\|_p = \sup\left\{ \sum_{i=1}^{n-1}|f(x_{i}) - f(x_{i+1})|^p\mid \{x_{i}\}_{i=1}^n\text{ is a
    finite partition of }[0,1]  \right\}^{1/p}
  \]
  where the supremum is over all 
  finite partitions $\left\{ x_i \right\}_{i=1}^n$ with $x_i\in [0,1]$ and $x_i < x_{i+1}$.
\end{definition}
However, in the following we will not be interested in the variations as norms and we are mostly
concerned whether the variation is positive or finite. Thus, we may ignore the last power and write
$V^p(f) = \|f\|_{p}^{p}$. We will also refer to the quantity $V^p$ as the \textbf{$p$-variation}.

The $p$-variation itself may have a critical exponent known as the variation index. 
\begin{definition}
  Let $f:[0,1]\to\R$. The \textbf{variation index} of $f$ is given by
  \[
    I(f) = \sup\left\{ p\geq 1 \mid V^p(f) =\infty \right\}.
  \]
\end{definition}
The definition of the variation index seems natural from the viewpoint of dimension theory, cf.\ the
definition of the Hausdorff dimension via the value of the $s$-Hausdorff measure.
One would perhaps also expect that 
\[
  \sup\left\{ p\geq 1 \mid V^p(f) =\infty \right\} = \inf\left\{ p\geq 1 \mid V^p(f)
  =0 \right\}
\]
as is often the case for dimensions. However, this is not true, as for any non-constant function
$f$, every $p$-variation is bounded below by $\max_{x,y\in[0,1]}|f(x)-f(y)|^p> 0$.

Recall that $f$ is said to be \textbf{$\alpha$-H\"older} if there exists $C>0$ such that
\[
  |f(x)-f(y)| \leq C |x-y|^\alpha
\]
for all $x,y\in[0,1]$.
The variation index of a function is further related to its H\"older exponent
as the following well-known result shows.
\begin{proposition}\label{thm:holdervariation}
  Let $f\in C(\bbS^1)$. The following two statements hold:
  \begin{enumerate}[(1)]
    \item If $f$ is $\alpha$-H\"older, then $V^{1/\alpha}(f\circ\tau)<\infty$ for any homeomorphism $\tau:\bbS^1\to\bbS^1$.
    \item If $V^p(f) < \infty$, then there exists a homeomorphism $\tau:\bbS^1 \to \bbS^1$ such that $f\circ \tau$ is $1/p$-H\"older.
  \end{enumerate}
\end{proposition}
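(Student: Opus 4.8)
\emph{Strategy.} The two parts have a very different flavour and I would prove them separately. Part (1) is an elementary estimate on partition sums; part (2) is the classical reparametrisation construction in which the partial $p$-variation of $f$ plays the role of a new time parameter. For (1), fix $C$ with $|f(x)-f(y)|\le C|x-y|^{\alpha}$ and let $\tau$ be a homeomorphism. The plan is to bound a generic partition sum of $f\circ\tau$: for $x_1<\dots<x_n$,
\[
  \sum_{i=1}^{n-1}\bigl|f(\tau(x_i))-f(\tau(x_{i+1}))\bigr|^{1/\alpha}
  \;\le\; C^{1/\alpha}\sum_{i=1}^{n-1}\bigl|\tau(x_i)-\tau(x_{i+1})\bigr|,
\]
and since $\tau$ is monotone the images $\tau(x_1),\dots,\tau(x_n)$ are again monotonically ordered, so the right-hand sum telescopes to at most $C^{1/\alpha}$. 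Taking the supremum over partitions yields $V^{1/\alpha}(f\circ\tau)\le C^{1/\alpha}<\infty$. On $\bbS^1$ one first passes to the excursion translate; an orientation-reversing $\tau$ merely reverses the order of the $\tau(x_i)$, which is harmless, and a single monotone pass around the circle still has total length at most $1$. (Equivalently: $V^{p}$ is invariant under time change, and an $\alpha$-Hölder function trivially has finite $(1/\alpha)$-variation.) I do not expect any obstacle here.

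For (2), assume after translation that $f$ is an excursion on $[0,1]$, set $M:=V^{p}(f)<\infty$, and define the partial $p$-variation $v(t):=V^{p}(f|_{[0,t]})$. This $v$ is non-decreasing with $v(0)=0$, $v(1)=M$, and \emph{super-additive}: concatenating near-optimal partitions of $[x,y]$ and $[y,z]$ gives a partition of $[x,z]$, so $V^{p}(f|_{[x,z]})\ge V^{p}(f|_{[x,y]})+V^{p}(f|_{[y,z]})$, and in particular $v(z)-v(y)\ge|f(y)-f(z)|^{p}$ for $y\le z$. Granting that $v$ is continuous, the map $\phi(t):=(v(t)+t)/(M+1)$ is a strictly increasing continuous bijection of $[0,1]$ fixing the endpoints, so it descends to a homeomorphism of $\bbS^1$; I would take $\tau:=\phi^{-1}$. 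Then for $s\le t$, writing $y=\tau(s)$ and $z=\tau(t)$,
\[
  \bigl|f(\tau(s))-f(\tau(t))\bigr|^{p}\;\le\;v(z)-v(y)\;\le\;(M+1)\bigl(\phi(z)-\phi(y)\bigr)\;=\;(M+1)(t-s),
\]
so $f\circ\tau$ is $(1/p)$-Hölder with constant $(M+1)^{1/p}$; passing back to $\bbS^1$ the estimate across the basepoint costs only a further factor $2$, since $a^{1/p}+b^{1/p}\le2(a+b)^{1/p}$ for $p\ge1$.

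The hard part — and the only step that really uses continuity of $f$ rather than just $V^{p}(f)<\infty$ — is the continuity of $v$, which I would isolate as a (classical) lemma. Monotonicity is clear, and right-continuity at $t_0$ reduces to $\lim_{t\downarrow t_0}V^{p}(f|_{[t_0,t]})=0$. I would use two ingredients: a \emph{refinement estimate}, namely that inserting one partition point $t'$ decreases a partition sum by at most $p\,(\operatorname{osc}f)^{p-1}\,|f(t')-f(t'')|$ for one of the two neighbours $t''$ of $t'$ in the refined partition (from $(a+b)^{p}-a^{p}-b^{p}\le p(a+b)^{p-1}\min\{a,b\}$), which is small by uniform continuity of $f$; and super-additivity, which forbids infinitely many pairwise adjacent subintervals each of $p$-variation bounded below by a fixed positive constant once $V^{p}(f)<\infty$. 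If $L_0:=\lim_{t\downarrow t_0}V^{p}(f|_{[t_0,t]})>0$, then for every $t>t_0$ one refines a near-optimal partition of $[t_0,t]$ by a point just above $t_0$ and splits off a subinterval $[t',t]$ with $V^{p}(f|_{[t',t]})\ge L_0/2$; iterating produces infinitely many adjacent subintervals of $(t_0,1]$ with $p$-variation $\ge L_0/2$, contradicting finiteness. Conversely, the same refinement bound shows that a genuine jump of $v$ at $t_0$ forces $L_0>0$; left-continuity is symmetric. Apart from this lemma, the remaining points are routine bookkeeping: translating a circle map to an excursion, noting that $V^{p}$ of a circle map does not depend on where the circle is cut, and the orientation-reversing case in (1).
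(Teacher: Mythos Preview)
Your proposal is correct and follows essentially the same route as the paper: part~(1) via the H\"older bound and the telescoping sum $\sum|\tau(x_i)-\tau(x_{i+1})|\le 1$, and part~(2) by reparametrising through $\phi(t)=(v(t)+t)/(M+1)$ with $v(t)=V^{p}(f|_{[0,t]})$ and taking $\tau=\phi^{-1}$ (the paper uses the cosmetically different $h_1(x)=\tfrac{1}{2M}v(x)+\tfrac{x}{2}$). The one noteworthy difference is that the paper simply asserts ``$h$ is continuous and non-decreasing'' without justification, whereas you supply a genuine argument for the continuity of $v$---a classical but not entirely trivial fact---so your write-up is in this respect more complete.
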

While the proof is standard, we provide it for completeness and as the basis for proofs that occur later in this paper.
\begin{proof}
  We first proof statement (1).
  Let $f$ be an $\alpha$-H\"older excursion function\footnote{Here and elsewhere, we will usually assume without loss
    of generality that $f$ is an excursion function on $[0,1]$. This often simplifies arguments by
  giving us an unambiguous ordering of the set, while not affecting the results.}, i.e.\ 
  \[
    |f(x)-f(y)| < C |x-y|^\alpha.
  \]
  Let $\{x_1,\dots,x_n\}\subset [0,1]$ be such that $x_i < x_{i+1}$. Then, for $p=1/\alpha$,
  \begin{align*}
    \sum_{i=1}^{n-1} |{f}(x_i) - {f}(x_{i+1})|^p 
    &\leq
    C^p \sum_{i=1}^{n-1} |x_{i} - x_{i+1} | \leq C^{1/\alpha}.
  \end{align*}
  Since the partition $\{x_i\}_{i=1}^n$ was arbitrary, we have $V^p(f) = V^{1/\alpha}(f) \leq C^{1/\alpha}<\infty$ as required.

  \smallskip
  For statement (2) we define $h(x)=V^p(f|_{[0,x]})$ for all $x\in[0,1)$ and 
  \[
    f|_{[x,y]}(z) = 
    \begin{cases}
      f(x) & \text{ for all }z\in[0,x),\\
      f(z) & \text{ for all }z\in[x,y],\\
      f(y) & \text{ for all }z\in(y,1).
    \end{cases}
  \]
  Notice that $h$ is continuous and non-decreasing and that $h(x)\to V^p(f)$ as $x\to 1$.
  Write $v = V^p(f)$. Then, $h_1(x) = \tfrac{1}{2v}h(x) + \tfrac{x}{2}$ is a strictly increasing
  continuous function such that $h_1(0)=0$ and $h_1(x)\to 1$ as $x\to 1$.
  Hence, $h_1$ is a homeomorphism on $\bbS^1$ and so is its inverse $\tau = h_1^{-1}$.

  It remains to show that $f\circ \tau$ is $\tfrac1p$-H\"older.
  Let $x,y\in[0,1)$ be distinct and assume without loss of generality that $x<y$.
  Then, 
  \begin{align*}
    |f(x) - f(y)|^p 
    &\leq |V^p(f|_{[x,y]})|
    \\
    &\leq |h(y)-h(x)|
    \\
    &=|2v(h_1(y)-y/2) - 2v(h_1(x)-x/2)|
    \\
    &=|2v(h_1(y)-h_1(x))-v(y-x)|
    \\
    &\leq 2v|h_1(y)-h_1(x)|.
  \end{align*}
  Writing $a=h_1(x)$ and $b=h_1(y)$, we obtain
  \[
    |f\circ \tau (a) - f\circ \tau(b)| \leq (2v)^{1/p}\cdot |a - b|^{1/p}.\qedhere
  \]
\end{proof}

\subsection{Dimension theory}
In the final part of this section we give a brief introduction of dimension theory, as well as
define some common dimensions used in metric geometry.
More detail on this subject can be found in the introductory books
\cite{FalconerBible,FalconerTechniques,MattilaGMT}

\subsubsection{The box dimension}
The box dimension (also known as box-counting or Minkowski dimension) of a metric space 
arises by considering coverings by sets of
comparable size. 
Let $N_r(X)$ be the minimal number of sets of diameter at most $r$ that are
required to cover a metric space $X$.
Recall that $N_r(X)$ is always finite if $X$ is totally bounded.
The box dimension establishes the relationship between number of covers and their size by $N_r(X)
\approx r^{-\bdd X}$.
\begin{definition}
  Let $X$ be a totally bounded metric space. The \textbf{upper} and \textbf{lower box dimension} are defined by
  \[
    \ubd X = \limsup_{r\to 0} \frac{\log N_r(X)}{-\log r}
    \quad
    \text{and}
    \quad
    \lbd X = \liminf_{r\to 0} \frac{\log N_r(X)}{-\log r}.
  \]
  We say that $X$ has \textbf{box dimension} $\bdd X = s$ if $\ubd X = \lbd X=s$.
\end{definition}

There are a number of equivalent notions of box dimension and since we will require these
equivalent notions for arbitrary metric spaces and most text books only consider Euclidean space,
we will spend some time showing they are equivalent.

Let $(X,d)$ be a metric space. We denote the open $r$-ball centred at $x\in X$ 
by $B(x,r) = \left\{ y\in X \mid d(x,y)<r \right\}$, the closed
$r$-ball by $\overline{B}(x,r) = \cl(B(x,r))$ and say that a collection of open $r$-balls $(B_i)_i$
is an \textbf{$r$-cover} of $X$ if $X=\bigcup_i B_i$. Similarly, we say that a collection of closed $r$-balls
$(\overline{B}_i)_i$ is an \textbf{$r$-packing} of $X$ if the collection is pairwise disjoint.
A subset $A\subseteq X$ is said to be \textbf{$r$-separated} if $d(x,y)>r$ for all distinct $x,y\in A$.
\begin{proposition}
  Let $X$ be a totally bounded metric space. 
  The upper and lower box dimension may equivalently be defined by replacing $N_r(X)$ with
  \begin{enumerate}[(1)]
    \item $N_r^{(1)}(X)$, the minimal cardinality of an $r$-cover of $X$.
    \item $N_{r}^{(2)}(X)$, the maximal cardinality of an $r$-separated subset of $X$.
    \item $N_r^{(3)}(X)$, the maximal cardinality of an $r$-packing of $X$.
  \end{enumerate}
\end{proposition}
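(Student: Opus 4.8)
The plan is to prove that $N_r(X)$ and each of $N_r^{(1)}(X)$, $N_r^{(2)}(X)$, $N_r^{(3)}(X)$ are comparable in the strong sense that each is squeezed between two copies of another, evaluated at scales that differ only by a fixed multiplicative constant; such constants disappear when one forms the limits defining $\ubd X$ and $\lbd X$, so all four choices yield the same upper and lower box dimensions.

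First I would compare $N_r$ with $N_r^{(1)}$. A set of diameter at most $r$ that contains a point $x$ is contained in $B(x,2r)$, so any cover of $X$ by sets of diameter at most $r$ can be thickened to an open $2r$-cover of the same cardinality; hence $N_{2r}^{(1)}(X)\le N_r(X)$. Conversely every open $r$-ball has diameter at most $2r$, so $N_{2r}(X)\le N_r^{(1)}(X)$.

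Next, the covering–separation step. If $A\subseteq X$ is an $r$-separated set of maximal cardinality, then it is inextendible, so every $z\in X$ lies within distance $r$ of some point of $A$; thus $\{B(a,2r):a\in A\}$ is an open $2r$-cover and $N_{2r}^{(1)}(X)\le N_r^{(2)}(X)$. In the other direction, any open $r$-ball contains at most one point of a $2r$-separated set, so $N_{2r}^{(2)}(X)\le N_r^{(1)}(X)$. The separation–packing step is similar: given an $r$-separated set $A$, the closed balls $\overline{B}(a,r/2)$, $a\in A$, are pairwise disjoint (two meeting would force two centres within distance $r$), so $N_{r/2}^{(3)}(X)\ge N_r^{(2)}(X)$; and the centres of any $r$-packing are pairwise at distance at least $r$ (if $d(x_i,x_j)<r$ then $x_j\in B(x_i,r)\subseteq\overline{B}(x_i,r)$ and the two balls would meet), hence form an $(r/2)$-separated set, giving $N_{r/2}^{(2)}(X)\ge N_r^{(3)}(X)$. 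Here I would take care to use $\overline{B}(x,\rho)=\cl(B(x,\rho))$, which in a general metric space may be strictly smaller than $\{y:d(x,y)\le\rho\}$; passing to radius $\rho=r/2$ rather than keeping $\rho=r$ leaves enough room to absorb every boundary case.

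Combining these estimates, there is an absolute constant $C\ge 1$ such that for all sufficiently small $r>0$ (small enough that total boundedness makes all the quantities finite), each $M_r\in\{N_r^{(1)}(X),N_r^{(2)}(X),N_r^{(3)}(X)\}$ satisfies $N_{Cr}(X)\le M_r\le N_{r/C}(X)$. Dividing by $-\log r$ and writing $\frac{\log N_{Cr}(X)}{-\log r}=\frac{\log N_{Cr}(X)}{-\log(Cr)}\cdot\frac{-\log(Cr)}{-\log r}$, the second factor tends to $1$ as $r\to 0$, so the $\limsup$ and $\liminf$ of both the lower and upper bounds coincide with $\ubd X$ and $\lbd X$ respectively; a squeeze then gives the claim. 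I do not expect a genuine obstacle here, as the statement is classical; the only real subtlety is the one already noted, that in an arbitrary metric space the several notions of ``ball of radius $r$'' need not agree, so every radius comparison should be carried out with a constant to spare rather than with equalities.
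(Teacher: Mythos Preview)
Your proof is correct and follows essentially the same approach as the paper: establish that the four counting functions are sandwiched between one another at scales differing only by fixed multiplicative constants, then observe that such constants vanish in the limit defining $\ubd$ and $\lbd$. The paper organises the comparisons as a single cyclic chain $N_{3r}^{(1)}\le N_r^{(3)}\le N_r^{(2)}\le N_{r/2}\le N_{r/4}^{(1)}$ rather than three pairwise two-sided estimates, but this is a difference of presentation only; your extra care with the distinction between $\cl(B(x,r))$ and $\{y:d(x,y)\le r\}$ is, if anything, slightly more scrupulous than the paper's treatment.
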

\begin{proof}
  Consider all points in the complement of a maximal $r$-packing of $X$. For each such point
  $x$ the distance to the nearest centre of a packing ball must be less than $2r$, since otherwise $B(x,r)$
  does not intersect any other ball, contradicting maximality of the packing. Hence $x$ is covered
  by enlarging that closest ball by a factor of $3$ (for open balls). But since $x$ was arbitrary,
  enlarging the $r$-packing of $X$ to open $3r$-balls gives a cover of $X$ by open $3r$-balls,
  hence $N_{3r}^{(1)}(X) \leq N_r^{(3)}(X)$.
  By taking the centres of the balls in a maximal $r$-packing of $X$, we get an $r$-separated
  set and  $N_{r}^{(2)}(X)\geq N_{r}^{3}(X)$.
  Any cover of sets with diameter at most $r/2$ can contain at most one element of an $r$-separated
  set. Hence $N_{r/2}(X)\geq N_r^{(2)}(X)$.
  Any cover by open balls of radius $r/2$ is also a cover by sets of diameter at most $r$. 
  Hence $N_r(X) \leq N_{r/2}^{1}(X)$.

  We have
  \[
    N_{3r}^{(1)}(X) \leq N_{r}^{(3)}(X) \leq N^{(2)}_{r}(X) \leq N_{r/2}(X) \leq
    N_{r/4}^{(1)}(X).
  \]
  Since for all $c_1,c_2 >0$,
  \(
    \frac{-\log c_1r}{-\log c_2r} \to 1
  \)
  as $r\to 0$ we find that 
  \[
    \frac{N_{3r}^{(1)}(X)}{-\log 3r}
    \leq C_1(r) \frac{ N_{r}^{(3)}(X)}{-\log r}
    \leq C_2(r) \frac{N^{(2)}_{r}(X)}{-\log r}
    \leq C_3(r) \frac{N_{r/2}(X)}{-\log r/2}
    \leq C_4(r) \frac{N_{r/4}^{(1)}(X)}{-\log r/4}
  \]
  for some $C_1,C_2,C_3,C_4\to 1$ as $r\to 0$. Hence all the limits coincide.
\end{proof}
Note that the arguments above can be adjusted to replace open balls by closed ones and vice versa.
In fact, the box dimensions are invariant under taking closures.
In the sequel we will not refer to the alternative definitions by the superscript and use the
notation $N_r$ for any of these equivalent notions interchangeably.

\smallskip
There is one further alternative description of the box dimension when considering subsets of
$\R^d$ that uses the decay of Lebesgue measure $\cL^d$ in open $r$-neighbourhoods of sets. We will
denote the latter by $\langle F \rangle_{r} = \left\{ x\in \R^d \mid \inf_{z\in F}|z-x|<r
\right\}$ for $F\subseteq\R^d$.
\begin{proposition}
  Let $F \subset \R^d$ be bounded. Then
  \[
    \ubd F = \inf\left\{ s>0 \mid \limsup_{r\to0} r^{s-d}\cL^d\left( \langle F\rangle_{r}
    \right)=0\right\}
  \]
  and 
  \[
    \lbd F = \inf\left\{ s>0 \mid \liminf_{r\to0} r^{s-d}\cL^d\left( \langle F\rangle_{r}
    \right)=0\right\}.
  \]
\end{proposition}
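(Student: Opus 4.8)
The plan is to reduce the statement to the covering/packing characterisation of box dimension proved above, by establishing — with constants depending only on $d$ — a two-sided comparison between $\cL^d(\langle F\rangle_r)$ and $r^d N_r(F)$, where $N_r(F)$ stands for any of the equivalent box-counting quantities. Concretely, I would show
\[
  \omega_d\, 2^{-d}\, r^d\, N_r(F) \;\leq\; \cL^d\bigl(\langle F\rangle_r\bigr) \;\leq\; \omega_d\, 2^{d}\, r^d\, N_r(F),
\]
where $\omega_d = \cL^d(B(0,1))$. For the right inequality I would cover $F$ by $N_r(F)$ sets of diameter at most $r$; each lies inside a ball of radius $r$, so its open $r$-neighbourhood lies inside a concentric ball of radius $2r$, and the union of these $N_r(F)$ balls contains $\langle F\rangle_r$. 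For the left inequality I would take a maximal $r$-separated subset $\{x_i\}$ of $F$ (of cardinality $N_r(F)$ in the loose notation of the previous proposition); the balls $B(x_i,r/2)$ are pairwise disjoint, and since each $x_i$ lies in $F$ they are all contained in $\langle F\rangle_r$.

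Granting this, I would set $\phi(r,s) = r^{s-d}\cL^d(\langle F\rangle_r)$, so that the comparison reads $\omega_d 2^{-d}\, r^s N_r(F) \leq \phi(r,s) \leq \omega_d 2^{d}\, r^s N_r(F)$. Consequently $\limsup_{r\to0}\phi(r,s)=0$ precisely when $\limsup_{r\to0} r^s N_r(F)=0$, and the same holds with $\liminf$ in place of $\limsup$. It thus remains to identify
\[
  \inf\{s>0 : \textstyle\limsup_{r\to0} r^s N_r(F)=0\} = \ubd F
  \qquad\text{and}\qquad
  \inf\{s>0 : \textstyle\liminf_{r\to0} r^s N_r(F)=0\} = \lbd F .
\]

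For the first equality I would argue as follows. If $s>\ubd F$, pick $s'\in(\ubd F,s)$; since $\ubd F$ is a $\limsup$, we have $N_r(F)\leq r^{-s'}$ for all sufficiently small $r$, hence $r^s N_r(F)\leq r^{s-s'}\to0$, so $s$ lies in the set and $\inf\leq\ubd F$. If $s<\ubd F$, pick $s'\in(s,\ubd F)$; there is a sequence $r_k\to0$ with $N_{r_k}(F)\geq r_k^{-s'}$, hence $r_k^s N_{r_k}(F)\geq r_k^{s-s'}\to\infty$, so $s$ is not in the set and $\inf\geq\ubd F$. The second equality follows by the same computation with the quantifiers ``for all small $r$'' and ``along a sequence $r_k\to0$'' interchanged, now reading off the definition of $\lbd F$ as a $\liminf$.

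I do not expect a genuine obstacle here. The only points demanding care are keeping the geometric constants independent of $r$ in the two-sided comparison, and, as already noted in the previous proposition, observing that replacing one equivalent choice of $N_r$ by another alters $\phi$ only by a bounded multiplicative factor together with a bounded rescaling of the argument $r$ — neither of which affects the relevant $\limsup$ or $\liminf$, since $\frac{-\log(cr)}{-\log r}\to1$ as $r\to0$.
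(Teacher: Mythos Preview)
Your argument is correct and is the standard one. The paper does not supply its own proof of this proposition; it simply refers to \cite[Proposition 2.4]{FalconerBible}, whose proof is essentially the covering/packing comparison you outline.
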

A proof of this can be found in \cite[Proposition 2.4]{FalconerBible}.
The limits $\overline{\mathcal{M}}^s(F) = \limsup_{r\to0} r^{s-d}\cL^d\left( \langle F\rangle_{r}
\right)$ and $\underline{\mathcal{M}}^s(F) = \liminf_{r\to0} r^{s-d}\cL^d\left( \langle F\rangle_{r}
\right)$ are known as the upper and lower Minkowski content, respectively.

We note that the upper box dimension is finitely stable, $\ubd \bigcup_{i=1}^n X_i = \max_i
\ubd X_i$ but not countably stable. The lower box dimension fails to be finitely stable entirely.
Both dimensions are monotone $\ubd B \geq \ubd A$ and $\lbd B \geq \lbd A$ if $A\subseteq B$.

\subsubsection{The modified box dimension}
In many applications it is desirable to have countable stability of dimensions and the modified box
dimension remedies this issue. 
\begin{definition}
  Let $X$ be a totally bounded metric space. The \textbf{modified upper box dimension} is given by 
  \[
    \umbd X = \inf \left\{ \sup_{i\in\N} \ubd X_i \mid X = \bigcup_{i\in\N} X_i \right\}
  \]
  where the infimum is taken over all countable covers of $X$.
\end{definition}

%

\section{Picard's theorem: Relating box dimension and \texorpdfstring{$p$}{p}-variation}
\label{sect:picard}
In \cite{Picard08}, Picard showed that the variation index is related to
the box dimension of the associated real tree.
\begin{theorem}[Picard, {\cite[Theorem 3.1]{Picard08}}]\label{thm:picard}
  Let $f\in C(\bbS^1)$. If $f$ is non-constant, then $\ubd \cT(f)=I(f)$.
\end{theorem}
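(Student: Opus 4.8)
The plan is to prove the two inequalities $\ubd\cT(f)\le I(f)$ and $\ubd\cT(f)\ge I(f)$ separately, in both cases passing through the packing characterisation of box dimension established above. Throughout I work with an excursion function $f$ on $[0,1]$, which costs nothing by the time-change invariance of $\cT(f)$ and of $I(f)$ (the latter via \cref{thm:holdervariation}). The key geometric observation linking the two worlds is the following: given a partition $0=x_0<x_1<\dots<x_n=1$, the quantities $|f(x_i)-f(x_{i+1})|$ are, up to the harmless correction coming from the common ancestor, the distances in $\cT(f)$ between the projected points $\pi(x_i)$; more precisely $d_f(x_i,x_j)\le f(x_i)+f(x_j)$ and, when the $x_i$ are close together in a region where $f$ has no deep dip, $d_f(x_i,x_{i+1})=|f(x_i)-f(x_{i+1})|$. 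Conversely, any point of $\cT(f)$ is $\pi(x)$ for some $x$, and a point at height $f(x)$ sits at distance roughly $f(x)$ from nearby branch points along its ancestral geodesic. This is the dictionary that turns $r$-packings of $\cT(f)$ into partitions witnessing $V^p(f)$ and vice versa.

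\smallskip
For the upper bound $\ubd\cT(f)\le I(f)$: fix $p>I(f)$, so $V^p(f)=M<\infty$. I want to show $N_r(\cT(f))\lesssim r^{-p}$, which gives $\ubd\cT(f)\le p$ and hence $\le I(f)$ by letting $p\downarrow I(f)$. Take a maximal $r$-packing of $\cT(f)$ by closed balls $\overline B(\pi(y_j),r)$, $j=1,\dots,N$. Each centre lies on its ancestral geodesic $\gamma_{y_j}$, and I select a point $x_j\in[0,1]$ with $\pi(x_j)$ equal to (or $r$-close to) the centre, chosen so that $f(x_j)$ is the height of the branch the ball occupies; because the balls are pairwise disjoint and have radius $r$, the corresponding ``edges'' of the tree are $\Omega(r)$-separated in tree distance, and tracing the geodesics back to the root shows that along the interval $[0,1]$ one can extract a partition whose consecutive increments $|f(x_j)-f(x_{j+1}')|$ are each $\gtrsim r$ — at least for a constant fraction of the balls after discarding $O(1)$ near the root. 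Summing, $N\cdot c r^p\le \sum|f(x_j)-f(x_{j+1}')|^p\le V^p(f)=M$, giving $N\le M c^{-1} r^{-p}$ as wanted. The bookkeeping needed to go from a packing of the \emph{tree} to a genuine ordered partition of $[0,1]$ — handling balls that straddle branch points and the fact that one vertical interval of the graph can be hit by the geodesics of many tree points — is the step I expect to be the main obstacle, and is exactly where Picard's original argument used integration along the tree; the point of the new proof is to do this combinatorially.

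\smallskip
For the lower bound $\ubd\cT(f)\ge I(f)$: fix $p<I(f)$, so $V^p(f)=\infty$. For each $r>0$ I must produce $\gtrsim r^{-p}$ disjoint closed $r$-balls in $\cT(f)$, which forces $N_r(\cT(f))\gtrsim r^{-p}$ along a sequence $r\to0$ and hence $\ubd\cT(f)\ge p$. Since $V^p(f)=\infty$, for any large $K$ there is a partition with $\sum_{i}|f(x_i)-f(x_{i+1})|^p>K$. Group the increments dyadically by size $|f(x_i)-f(x_{i+1})|\in[2^{-k-1},2^{-k})$; by a pigeonhole argument on the divergent sum there is a scale $k$ at which the number $N_k$ of such increments satisfies $N_k\gtrsim 2^{kp}$ (up to logarithmic factors that wash out in the dimension). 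Each such increment gives two points $x_i,x_{i+1}$ whose projections are at tree-distance at least $2^{-k-1}$ minus twice the dip, and by passing to a subfamily (discarding increments whose interval contains a deep enough minimum) one gets $\gtrsim 2^{kp}$ points in $\cT(f)$ that are pairwise $\gtrsim 2^{-k}$-separated, hence the centres of a $c2^{-k}$-packing. Rescaling $r=c2^{-k}$ yields $N_r(\cT(f))\gtrsim r^{-p}$ along this sequence. Finally, letting $p\uparrow I(f)$ completes the proof; combining the two inequalities gives $\ubd\cT(f)=I(f)$.
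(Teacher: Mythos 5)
Your overall strategy --- translating between $r$-packings of $\cT(f)$ and finite partitions of $[0,1]$ via the dictionary $d_f(x,y)\gtrsim |f(x)-f(y)|$ --- is exactly the one the paper uses. But in both directions you stop short of, or misstate, the step that actually makes the translation work.

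For $\ubd\cT(f)\le I(f)$ you yourself flag the obstacle: converting a maximal $r$-packing of $\cT(f)$ into an \emph{ordered} partition of $[0,1]$ with pairwise disjoint intervals and increments $\gtrsim r$. This is not mere bookkeeping; it is the whole content of the paper's argument, and the mechanism is specific. For each ball centre $\widetilde x_k$ take $x_k=\min\{x\mid\pi(x)=\widetilde x_k\}$ and define $y_k$ as the leftmost point such that $f\ge f(x_k)-r_j$ on $[y_k,x_k]$; then $f(x_k)-f(y_k)=r_j$ (unless $y_k=0$), and crucially $\pi([y_k,x_k])\subset B_k$, so disjointness of the packing balls forces disjointness of the intervals $[y_k,x_k]$ --- which is precisely what makes $\{y_k,x_k\}_k$ a legitimate partition witnessing $V^p(f)\ge (N_{r_j}-1)\,r_j^p$. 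Without producing this device (or an equivalent one) your upper-bound argument is incomplete.

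For $\ubd\cT(f)\ge I(f)$ there is a real gap in the separation step, and a small confusion en route. First, $d_f(x_i,x_{i+1})=f(x_i)+f(x_{i+1})-2\min_{[x_i,x_{i+1}]}f$ is always $\ge |f(x_i)-f(x_{i+1})|$; a dip only \emph{increases} tree distance, so ``$2^{-k-1}$ minus twice the dip'' is the wrong direction and is not the issue. The actual issue is pairwise separation of $\pi(x_i)$ for \emph{different} increments: two endpoints $x_i$, $x_j$ with $i\ne j$ can project to nearly (or exactly) the same point of $\cT(f)$ if $f(x_i)\approx f(x_j)$ and $f$ has no dip below that level between them. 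Discarding increments with deep internal minima does nothing to prevent this --- if anything, deep dips help separation --- and you give no reason the surviving family still has $\gtrsim 2^{kp}$ elements. The paper resolves this with \cref{thm:zigzag}: pass to an $(M,p)$-variation-maximal partition, which is shown to alternate and to satisfy $d_f(x_i,x_{i+1})=|f(x_i)-f(x_{i+1})|$; then for upcrossing endpoints $x_{2i}$ one has $d_f(x_{2i},x_{2j})\ge\max\{d_f(x_{2i-1},x_{2i}),d_f(x_{2j-1},x_{2j})\}$, giving a genuinely $2^{-(k+1)}$-separated set $\pi(\Delta_k)$. That lemma (or something doing the same job) is the missing ingredient in your pigeonhole argument, and it is nontrivial: it relies on the extremality of the partition, not on post hoc pruning.

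(Also note: the paper runs both directions in the contrapositive form ---- ``if $\ubd\cT(f)>p$ then $V^p(f)=\infty$'' and ``if $\ubd\cT(f)<s<p$ then $V^p(f)<\infty$''. Your direct phrasing is logically equivalent, so that difference is cosmetic.)
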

Our main contribution in this section is to provide a concise and direct proof of this statement
that uses standard arguments from fractal geometry and  dimension theory.
Before we do so, we remark that Picard theorem appears to be intricately connected with general
H\"older projections of the interval, see \cite{Balka23}.
\begin{proof}[Proof of \cref{thm:picard} (upper bound)]
  We first show that $\ubd \cT(f) \leq I(f)$.
  Let $\eps>0$ and choose $p$ and $s$ such that 
  \[
    \ubd \cT(f) - \eps < p <s < \ubd \cT(f).
  \]
  Then, by the definition of the upper box dimension, the maximal number $N_{r_j}=N_{r_j}(\cT(f))$ of
  disjoint centered closed balls
  of radius $r_j>0$ in $\cT(f)$ satisfies 
  $N_{r_j}(\cT(f)) \geq \max\{r_{j}^{-s},2\}$ for some sequence $r_j\to 0$ as $j\to\infty$.
  Let $\{B_k\}$ be such a maximal collection of disjoint closed $r_j$-balls in $\cT(f)$ with
  centres $\widetilde x_k \in\cT(f)$.
  For all $k\geq 1$, define
  $x_k, y_k\in [0,1)$ by
  \[
    x_k = \min\{ x \mid \pi(x) = \widetilde x_k\} \quad\text{ and }\quad
    y_k=\min\{y\mid{f}(z)\geq {f}(x_k)-r_j \text{ for all }z\in[y,x_k]\}.
  \]
  Note that this is well defined by the continuity of ${f}$ and that
  ${f}(x_k)-{f}(y_k)=r_j$ if $y_k \neq
  0$. Also observe that 
  $\pi(y_k) \in B_k$ and further that $\pi([y_k, x_k]) \in B_k$.
  Now, since the balls $B_k$ are disjoint, so are the intervals $[y_k, x_k]$ and there is at most
  one $k\in\N$ such that $y_k = 0$. We obtain
  \begin{align*}
    V^p(f) &= \sup\left\{ \sum_{i=1}^{n-1}|{f}(z_i) - {f}(z_{i+1})|^{p} \mid \{z_i\}_{i=1}^n \text{ is a finite
    partition of }[0,1] \right\} \\
    & \geq \sum_{k=1}^{N_{r_j}}|{f}(y_k) - {f}(x_k)|^p\\
    & \geq (N_{r_j}-1)r_j^p 
    \geq \tfrac12 r_j^{-s} r_j^p\to\infty \quad\text{as}\quad r_j\to0.
  \end{align*}
  Hence $I(f) \geq p > \ubd \cT(f) -\eps$ which completes the first part as $\eps>0$ was
  arbitrary.
\end{proof}
To prove the lower bound we need a technical lemma that proves the existence of a ``zig-zag''
subgraph. Observe first that restricting the variation to partitions with $M$ elements has the
effect of making the supremum a maximum:
Define
\[
  V_M^p(f) = \max\left\{ \sum_{i=1}^{M-1} |f(x_i)-f(x_{i+1})|^p \mid \left\{ x_i
  \right\}_{i=1}^M \text{ is a partition of }[0,1] \right\}.
\]
To see that the maximum exists, consider any 
sequence of tuples $\vec{x}(n) = \left\{ x_i(n) \right\}_{i=1}^M$ such that
\[
  \sum_{i=1}^{M-1} |f(x_i(n))-f(x_{i+1}(n))|^p \to V_M^p(f)
  \quad\text{as}\quad n\to\infty.
\]
Notice that $\vec{x}(n)\in [0,1]^M$ and by compactness, there exists a convergent subsequence with
limit $\vec{x}(\infty)=\{x_i(\infty)\}\in[0,1]^M$. Notice too, that by continuity of $\sum_{i=1}^{M-1}
|f(x_i)-f(x_{i+1})|^p$ as a function on $[0,1]^M$, we must have
\[
  \sum_{i=1}^{M-1} |f(x_i(\infty))-f(x_{i+1}(\infty))|^p = V_M^p(f)
\]
Hence, the supremum is achieved at the tuple $\vec x(\infty)$.

\smallskip
We say that a partition $\left\{ x_i \right\}$ of cardinality $M\in\N$ is 
\textbf{$(M,p)$-variation maximal} if 
\[
  \sum_{i=1}^{M-1} |f(x_i)-f(x_{i+1})|^p = V_{M}^{p}(f).
\]

\begin{lemma}\label{thm:zigzag}
  Let $f\in C(\bbS^1)$ be non-constant. Assume that $I(f)>1$.
  Then there exists an increasing sequence $(M_n)$ and sequence of partitions $\left\{
  x_i(n)\right\}_{i=1}^{M_n}$ that is $(M_n,p)$-variation maximal such that:
  \begin{enumerate}[(1)]
    \item $V_{M_n}^p(f)$ is 
      increasing,
    \item $V_{M_n}^p(f)\to V^p(f)$,

    \item $V_{M_n}^p(f) > V_{k}^p(f)$ for all $k<M_n$,

    \item and either
      \begin{equation}\label{eq:orderedPoints}
	f(x_1) < f(x_2) > f(x_3) < f(x_4) > \dots
	\quad\text{or}\quad
	f(x_1) > f(x_2) < f(x_3) > f(x_4) < \dots,
      \end{equation}
    \item as well as $d_f(x_i,x_{i+1}) = |f(x_i) - f(x_{i+1})|$ for all $1\leq i \leq M_n -1$.
  \end{enumerate}
\end{lemma}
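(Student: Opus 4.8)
\emph{Plan.} The plan is to produce the indices $M_n$ by a ``record-breaking'' minimality argument, and then to show that \emph{every} $(M_n,p)$-variation maximal partition is automatically of zig-zag type; properties (4) and (5) will be forced by nothing more than the strict inequality $V_{M_n}^p(f)>V_{M_n-1}^p(f)$. Throughout I use that $V^p(f)=\infty$ (the relevant range $p<I(f)$ being non-empty since $I(f)>1$), so that $\sup_M V_M^p(f)=V^p(f)=\infty$, together with $V_1^p(f)=0$ and $V_2^p(f)=(\max f-\min f)^p>0$. Put $\mathcal G=\{M\ge 2\mid V_M^p(f)>V_k^p(f)\text{ for all }k<M\}$. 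For any $K>0$ the set $\{M\mid V_M^p(f)>K\}$ is non-empty, and its least element lies in $\mathcal G$; hence $\mathcal G$ is infinite with $\sup_{M\in\mathcal G}V_M^p(f)=\infty$. Enumerate $\mathcal G=\{M_1<M_2<\cdots\}$. Then (3) is the definition of $\mathcal G$, (1) holds because $M_{n+1}\in\mathcal G$ and $M_n<M_{n+1}$ give $V_{M_n}^p(f)<V_{M_{n+1}}^p(f)$, and (2) holds because the increasing sequence $(V_{M_n}^p(f))_n$ has supremum $\infty=V^p(f)$. For each $n$ fix an arbitrary $(M_n,p)$-variation maximal partition $\{x_i(n)\}_{i=1}^{M_n}$; since $M_n\in\mathcal G$ it satisfies $V_{M_n}^p(f)>V_{M_n-1}^p(f)$, and it remains to deduce (4) and (5) from this.

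\emph{Pruning gives (4).} Drop $n$ from the notation. First, consecutive values are distinct: if $f(x_j)=f(x_{j+1})$ for some $j$, then deleting $x_j$ yields an $(M-1)$-point partition with the \emph{same} $p$-sum, whence $V_{M-1}^p(f)\ge V_M^p(f)$, a contradiction. Next, no interior triple is monotone: if $f(x_{i-1}),f(x_i),f(x_{i+1})$ is monotone for some $2\le i\le M-1$, then with $a=|f(x_{i-1})-f(x_i)|$ and $b=|f(x_i)-f(x_{i+1})|$ the elementary inequality $(a+b)^p\ge a^p+b^p$ for $p\ge 1$ shows that deleting $x_i$ does not decrease the $p$-sum, again contradicting $V_M^p(f)>V_{M-1}^p(f)$. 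Hence every interior $x_i$ is a strict local extremum of the sequence $(f(x_i))_i$; since strict local maxima and minima must alternate, this sequence obeys exactly one of the two chains in \eqref{eq:orderedPoints}, which is (4).

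\emph{An exchange argument gives (5).} By the formula $d_f(x,y)=f(x)+f(y)-2\min_{[x,y]}f$, the identity $d_f(x_i,x_{i+1})=|f(x_i)-f(x_{i+1})|$ is equivalent to $\min_{[x_i,x_{i+1}]}f=m:=\min\{f(x_i),f(x_{i+1})\}$. Suppose instead that $f(y)<m$ for some $y\in(x_i,x_{i+1})$. Say $m=f(x_{i+1})$, so $f(x_{i+1})<f(x_i)$, and by (4) also $f(x_{i+1})<f(x_{i+2})$ whenever $i+1<M$. Replacing $x_{i+1}$ by $y$ keeps the partition strictly increasing, and since $f(y)<f(x_{i+1})$ is strictly below both $f(x_i)$ and $f(x_{i+2})$, each of the one or two affected terms of the $p$-sum strictly increases, so the new $M$-point partition beats $V_M^p(f)$ — impossible. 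The case $m=f(x_i)$ is symmetric (replace $x_i$ by $y$), and the endpoint cases $i=1$ and $i+1=M$ are the same with a single affected term. Hence no such $y$ exists, (5) holds on every edge, and applying this to each $M_n$ completes the proof.

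\emph{Main obstacle.} None of the steps is deep, but the delicate point is the exchange step for (5): one has to verify that substituting the low point $y$ for $x_i$ (resp.\ $x_{i+1}$) \emph{strictly} increases the $p$-sum in every configuration, including the two boundary indices. The zig-zag structure from (4) is precisely what makes this work, since it guarantees that the neighbours of the substituted point carry strictly larger $f$-values, so that each affected term genuinely grows; without (4) a monotone run could render the substitution neutral.
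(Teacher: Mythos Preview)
Your argument is correct and follows the same strategy as the paper's: single out those $M$ at which $V_M^p(f)$ strictly exceeds all smaller values (the paper does this via the least $M(\epsilon)$ meeting a rising threshold, you via the record-breaking set $\mathcal G$), and then use that strict inequality to rule out equal consecutive values and monotone triples for (4), followed by the same exchange argument for (5).

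There is one restriction worth flagging. You assume $V^p(f)=\infty$ and justify this by taking $p<I(f)$, but the lemma carries no such hypothesis on $p$; in its application to the lower bound in Picard's theorem the exponent is chosen with $p>\ubd\cT(f)$, where one does not know in advance whether $V^p(f)$ is finite. The paper's proof covers both cases via the threshold $\min\{V^p(f)-\eps,1/\eps\}$, and invokes the hypothesis $I(f)>1$ precisely to ensure that $M(\eps)\to\infty$ even when $V^p(f)<\infty$ (equivalently, that your set $\mathcal G$ is infinite): since $I(f)>1$ forces $V^1(f)=\infty$, no finite partition can realise $V^p(f)$, so the record is broken infinitely often. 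Your argument never uses $I(f)>1$, which is the tell that the finite-variation case is missing. That said, the downstream application can be recast as a contradiction argument (assume $V^p(f)=\infty$, bound $V_{M_n}^p(f)$ uniformly, contradict (2)), so your restricted version is in fact sufficient there.
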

\begin{proof}
  Since $f$ is non-constant, we have $V^p(f)>0$.
  Let $\eps>0$ be smaller than $V^p(f)$ and let $M=M(\eps)$ be the least integer such that 
  \[
    V_M^p(f) \geq \min\left\{ V^p(f)-\eps, 1/\eps \right\}.
  \]
  Let $\{x_i\}_{i=1}^M$ be an $(M,p)$-variation maximal partition.
  For $M=2$, this partition is given by taking a maximal and minimal argument of
  $f$. This can be seen to satisfy our conclusions for non-constant excursion functions. Hence we may
  assume that $M\geq 3$. 

  Further, we
  may assume that the string of inequalities in \eqref{eq:orderedPoints} holds, for if not,
  $f(x_{i-1})<f(x_i)<f(x_{i+1})$ or $f(x_{i-1})>f(x_i)>f(x_{i+1})$ for some $1<i<M$. Then 
  \begin{align*}
    |f(x_{i-1})-f(x_{i+1})|^p &= |f(x_{i-1})-f(x_i)+f(x_i)-f(x_{i+1})|^p\\
    &=\left( |f(x_{i-1})-f(x_i)| +|f(x_{i+1})-f(x_i)| \right)^p\\
    &\geq |f(x_{i-1})-f(x_i)|^p + |f(x_{i+1})-f(x_i)|^p,
  \end{align*}
  where the last inequality holds since $p\geq 1$.
  But then the partition $\left\{ x_j \right\}_{j=1}^M \setminus \left\{ x_i \right\}$ has variational sum at
  least as large as the partition $\left\{ x_j \right\}_{j=1}^M$ which contradicts minimal
  cardinality.

  \smallskip
  Having shown \eqref{eq:orderedPoints}, we now assume for a contradiction that there exists $i$
  such that $d_f(x_i,x_{i+1}) \neq |f(x_i) -
  f(x_{i+1})|$. Since
  \begin{align*}
    d_f(x_i,x_{i+1}) &= f(x_i)+f(x_{i+1}) - 2\min_{x_i \leq z \leq x_{i+1}}f(z)
    \\
    &\geq f(x_i)+f(x_{i+1})-2 \min\left\{ f(x_i),f(x_{i+1}) \right\} \\
    &= |f(x_i) -  f(x_{i+1})|,
  \end{align*}
  we must have $d_f(x_i,x_{i+1}) > |f(x_i) -  f(x_{i+1})|$.
  But then there exists $z$ with $x_{i} < z < x_{i+1}$ such that $f(z) < \min\left\{ f(x_i),
  f(x_{i+1})\right\}$. 

  We differentiate several cases. First assume $f(x_i) < f(x_{i+1})$ and $2\leq i\leq M-1$. Note that 
  \begin{equation}\label{eq:obs1}
    |f(x_{i-1})- f(x_i)|^p +|f(x_{i})- f(x_{i+1})|^p < |f(x_{i-1})-f(z)|^p + |f(z) -
    f(x_{i+1})|^p
  \end{equation}
  and so $\left\{ z \right\}\cup\left\{ x_j \right\}_{j=1}^M \setminus\left\{ x_i \right\}$ has variational sum larger
  than the partition $\left\{ x_j \right\}_{j=1}^M$ contradicting that $\{x_i\}_{i=1}^M$ was an
  $(M,p)$-variation maximal partition.

  Second, assume that $f(x_i)>f(x_{i+1})$ and $1\leq i \leq M-2$. Then,
  \begin{equation}\label{eq:obs2}
    |f(x_{i})- f(x_{i+1})|^p +|f(x_{i+1})- f(x_{i+2})|^p < |f(x_{i})-f(z)|^p + |f(z) -
    f(x_{i+2})|^p
  \end{equation}
  leading to the same contradiction with the partition $\left\{ z \right\}\cup\left\{ x_j
  \right\}_{j=1}^M \setminus\left\{ x_{i+1} \right\}$.

  The situations when $i=1$ and $i=M-1$ can be handled
  similarly, letting the summands with undefined function values in \eqref{eq:obs1} and \eqref{eq:obs2} be zero, respectively.

  \smallskip
  Note that the assumption that $I(f)>1$ implies that the function is non-smooth and not of bounded
  variation. Thus, increasing the number of partition elements must strictly increase the variation.
  Finally, letting $\eps\to 0$ gives a sequence of $M(\eps)$ with the desired properties.
\end{proof}
\begin{proof}[Proof of \cref{thm:picard} (lower bound)]
  We may assume that $I(f)>1$ as otherwise there is nothing to prove.
  Let $\eps>0$ and let $s,p$ be such that 
  \[
    \ubd \cT(f) < s < p < \ubd \cT(f) +\eps.
  \]
  \cref{thm:zigzag} above guarantees the existence of a subset $\left\{ x_i \right\}_{i=1}^{M_n}$ of
  minimal cardinality and maximal variational sum.
  We denote the variational sum by $V_{M_n}^p(f)$. By \eqref{eq:orderedPoints} the differences
  $f(x_i)-f(x_{i+1})$ alternate between positive and negative. By the pigeonhole principle either
  the positive or negative terms in the variational sum make up at least half the total sum
  $V_{M_n}^p(f)$.
  Assume for now that half the weight is given by the positive terms and that
  $f(x_1) < f(x_2)$.
  We obtain $\lfloor M_n/2\rfloor$ intervals $[x_{2i-1},x_{2i}]$ for which $f(x_{2i}) -
  f(x_{2i-1}) >0$.
  Write $\Delta_k = \left\{ x_{2i} \mid 2^{-(k+1)} < |f(x_{2i-1}) - f(x_{2i})| < 2^{-k} \right\}$.
  Observe that for all $i\neq j$ we have $d_f(x_{2i},x_{2j}) \geq \max\left\{
  d_f(x_{2i-1},x_{2i}), d_f(x_{2j-1},x_{2j})  \right\} \geq 2^{-(k+1)}$ since $f$ is increasing
  between $x_{2i-1}$ and $x_{2i}$, and $x_{2j-1}$ and $x_{2j}$, respectively. Hence
  $\pi(\Delta_k)\subset \cT(f)$ is a
  $2^{-(k+1)}$-separated set with respect to the $d_f$ metric. Note further that $\Delta_k=\varnothing$ for
  $k<k_0$ where $2^{k_0} \geq \|f\|_\infty$.
  So,
  \begin{align}
    V_{M_n}^p(f) &= \sum_{i=1}^{M_n-1}|f(x_i) - f(x_{i+1})|^p \leq 2 \sum_{i=1}^{\lfloor
    M_n/2\rfloor} |f(x_{2i}) - f(x_{2i-1})|^p\nonumber\\
    & = 2 \sum_{k=k_0}^\infty \sum_{x_{2i}\in\Delta_k}|f(x_{2i}) - f(x_{2i-1})|^p
    \leq 2\sum_{k=k_0}^\infty \#\Delta_k 2^{-p k},\label{eq:boundedvariation}
  \end{align}
  with a similar analysis holding for the remaining cases.
  Since $\pi(\Delta_k)$ is a $2^{-(k+1)}$-separated set in $\cT(f)$ and $\ubd \cT(f) <s$, we must
  have $\#\Delta_k \leq C 2^{s(k+1)}$ for some $C>0$.

  The bound in \eqref{eq:boundedvariation} then gives
  \[
    V_{M_n}^p(f) \leq 2\sum_{k=k_0}^{\infty}\#\Delta_k 2^{-p k} 
    =2 \sum_{k=k_0}^\infty C 2^{s(k+1) - p k} \leq C' 2^{(s-p)k_0} < \infty
  \]
  for some $C'>0$, making the bound independent of $M_n$.
  Taking limits gives $V^p(f) < \infty$ and so $I(f)\leq p$. Since $\eps>0$ was arbitrary,
  we get $I(f) \leq \ubd T(f)$ as required.
\end{proof}

\section{Discretised variations and variational contents}
\label{sect:morevariations}
The usage of $r$-separated subsets leads us to other natural definitions of variations that, to
the best of our knowledge, have not been considered in isolation yet.
We first introduce a discretised version, the $(p,r)$-variation:
\begin{definition}
  Let $f:[0,1]\to \R$. The \textbf{$(p,r)$-variation}
  $V_r^p(f)$ of $f$ is given by
  \begin{multline*}
    V_r^p(f)=\sup\Bigg\{ \sum_{i=1}^{n-1}|f(x_i)-f(x_{i+1})|^p \mid \left\{ x_i
    \right\}_{i=1}^n \text{ is a finite partition with }\\|f(x_i)-f(x_{i+1})| = r\Bigg\}
  \end{multline*}
\end{definition}
Observe that if $f$ is continuous the supremum must therefore be achieved for some
finite partition because of uniform continuity 
(fixing any first point gives finitely many other choices due to the exact $r$ step
gaps).
Further, since all summands are equal to $r$, the $(p, r)$-variation is equal to $r^p\cdot M_r^p(f)
$, where $M_r^p(f)$ is the maximal number of partition elements that satisfy the
definition.

This leads to the definitions of an upper and lower variation content.
\begin{definition}
  Let $f:[0,1]\to[0,\infty)$ be an excursion function. The \textbf{upper} and \textbf{lower variation
  content} is defined by 
  \[
    \overline{V}^p (f) = \limsup_{r\to0}V_r^p (f)
    \quad\text{and}\quad
    \underline{V}^p(f) = \liminf_{r\to0}V_r^p(f),
  \]
  respectively. 
  The \textbf{upper} and \textbf{lower variation index} are 
  \[
    \overline{I}(f) = \sup\Big\{p \geq 1 \mid \overline{V}^p(f) = \infty\Big\}
    \quad\text{and}\quad
    \underline{I}(f) = \sup\Big\{p \geq 1 \mid \underline{V}^p(f) = \infty\Big\}.
  \]
\end{definition}

This quantity does not have the property that the variation is always positive for non-constant
functions. In fact, the variation decays exponentially for exponents above its index.
\begin{lemma}\label{thm:threshold}
  Let $f:[0,1]\to\R$ and $s,p>0$.
  Then, the following two statements hold:
  \begin{enumerate}[(1)]
    \item If $\overline{V}^p(f)<\infty$, then there exists $C>0$ such that $V^s_r(f) \leq C
      r^{s-p}$.
    \item If $\overline{V}^p(f)>0$, then there exists $C>0$ and a sequence of scales $r_i\to 0$ such
      that $V^s_{r_i}(f) \geq C r_{i}^{s-p}$.
  \end{enumerate}
\end{lemma}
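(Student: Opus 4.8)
The plan is to relate the $(p,r)$-variation to the $(s,r)$-variation by observing that both are essentially counting the same combinatorial objects at scale $r$, only weighting them with different powers of $r$.

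Recall that $V_r^p(f) = r^p M_r^p(f)$ and $V_r^s(f) = r^s M_r^s(f)$, where $M_r^p(f)$ is the maximal number of partition elements in a partition whose consecutive function values differ by exactly $r$. The crucial point is that $M_r^p(f)$ does \emph{not} depend on $p$: it is simply the length of the longest admissible partition at scale $r$, a quantity I will just call $M_r(f)$. Hence $V_r^p(f) = r^p M_r(f)$ and $V_r^s(f) = r^s M_r(f)$ for all $p,s > 0$, which gives the exact identity
\[
  V_r^s(f) = r^{s-p} \, V_r^p(f).
\]
Once this identity is in place, both statements are immediate. For (1), if $\overline{V}^p(f) = \limsup_{r\to 0} V_r^p(f) < \infty$, then there is some $r_0 > 0$ and a constant $C>0$ with $V_r^p(f) \leq C$ for all $r < r_0$; combined with the identity this yields $V_r^s(f) \leq C r^{s-p}$ on $(0,r_0)$ (and boundedness on $[r_0, \operatorname{diam}]$ can be absorbed into $C$ after adjusting, or the statement restricted to small $r$). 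For (2), if $\overline{V}^p(f) > 0$, then $\limsup_{r\to 0} V_r^p(f) > 0$, so there is a sequence $r_i \to 0$ and $C > 0$ with $V_{r_i}^p(f) \geq C$; the identity then gives $V_{r_i}^s(f) = r_i^{s-p} V_{r_i}^p(f) \geq C r_i^{s-p}$.

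The main thing to get right is the observation that $M_r^p(f)$ is independent of $p$, i.e.\ that the supremum defining $V_r^p(f)$ is maximised by the partition of greatest length regardless of the exponent. This holds precisely because every summand equals $r$ (the partition is constrained so that $|f(x_i) - f(x_{i+1})| = r$), so the variational sum is literally $(\#\text{steps}) \cdot r^p$, a strictly increasing function of the number of steps; hence maximising the sum is the same as maximising the number of partition elements, a choice that does not see $p$ at all. I expect the only subtlety — hardly an obstacle — to be bookkeeping at non-small scales $r$ in part (1); but since the claimed conclusions are of the form "there exists $C>0$" with no uniformity required down to $r$ comparable to the diameter, one can simply enlarge $C$ to cover the compact range of scales bounded away from $0$, or note that the interesting regime is $r \to 0$ in any case.
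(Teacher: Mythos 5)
Your proof is correct and is essentially the same as the paper's: both rest on the identity $V_r^s(f) = r^{s-p}\,V_r^p(f)$, which the paper obtains by factoring $r^{s-p}$ out of the supremum and you obtain (equivalently) by noting that the optimal partition count $M_r(f)$ is independent of the exponent. The concluding limsup bookkeeping, including absorbing the range $r\geq r_0$ into the constant for part (1), matches the paper's remark that $\overline{V}^p(f)<\infty$ implies $\sup_{r>0}V_r^p(f)<\infty$.
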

\begin{proof}
  Both statements follow from the simple observation that
  \begin{align*}
    V_r^s(f)
    &=
    \sup\left\{ \sum_{i=1}^{n-1}r^s \mid \left\{ x_i
    \right\}_{i=1}^n\text{ is a finite partition with }|f(x_i)-f(x_{i+1})| = r\right\}
    \\
    &=
    r^{s-p}\cdot\sup\left\{ \sum_{i=1}^{n-1}r^p \mid \left\{ x_i
    \right\}_{i=1}^n\text{ is a finite partition with }|f(x_i)-f(x_{i+1})| = r\right\}
    \\
    &=r^{s-p} \cdot V_r^p(f)
  \end{align*}
  combined with the facts that $\overline{V}^p(f)<\infty$ implies $\sup_{r>0}V^p_r(f)<\infty$ and
  $\overline{V}^p(f)>0$ implies the existence of scales $r_i\to 0$ such that $\inf_{i\in
  \N}V_{r_i}^p(f)>0$.
\end{proof}
We immediately get the following helpful proposition.
\begin{proposition}
  Let $f\in C(\bbS^1)$ be non-constant. Then for all $1\leq p<\overline{I}(f)$ we have
  $\overline{V}^p(f)=\infty$ and for all $\overline{I}(f) < p < \infty$, we have
  $\overline{V}^p(f)=0$. 

  Similarly, for all $1\leq p<\underline{I}(f)$ we have
  $\underline{V}^p(f)=\infty$ and for all $\underline{I}(f) < p < \infty$, we have
  $\underline{V}^p(f)=0$. 
\end{proposition}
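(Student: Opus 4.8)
The plan is to derive everything from \cref{thm:threshold} together with the definition of the upper and lower variation index, so no new ideas are needed beyond bookkeeping with the exponential decay/growth estimates. First I would record the trivial monotonicity observation: from the identity $V_r^s(f) = r^{s-p} V_r^p(f)$ used in the proof of \cref{thm:threshold}, it follows that if $\overline V^p(f) = \infty$ then $\overline V^q(f) = \infty$ for every $q \le p$ (since for $r<1$, $V_r^q(f) = r^{q-p} V_r^p(f) \ge V_r^p(f)$, so the $\limsup$ is still infinite), and likewise $\underline V^p(f) = \infty$ implies $\underline V^q(f) = \infty$ for $q \le p$. This gives that $\{p : \overline V^p(f) = \infty\}$ and $\{p : \underline V^p(f) = \infty\}$ are (possibly empty, possibly unbounded) intervals with left endpoint $1$, and hence justifies that for $1 \le p < \overline I(f)$ there is some $p' \in (p, \overline I(f))$ with $\overline V^{p'}(f) = \infty$, and by the monotonicity just noted $\overline V^p(f) = \infty$ as well. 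The same argument handles the lower content.

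Next I would treat the vanishing half. Fix $p$ with $\overline I(f) < p < \infty$ and choose an intermediate exponent $q$ with $\overline I(f) < q < p$. By definition of $\overline I(f)$ as a supremum, $\overline V^q(f) < \infty$ (it cannot be infinite, or $\overline I(f) \ge q$). Now apply part (1) of \cref{thm:threshold} with the roles $s := p$ and $p := q$: there is $C>0$ with $V_r^p(f) \le C r^{p-q}$ for all $r$. Since $p - q > 0$, letting $r \to 0$ forces $\overline V^p(f) = \limsup_{r\to 0} V_r^p(f) = 0$. For the lower content the identical argument works: pick $\underline I(f) < q < p$, note $\underline V^q(f) < \infty$, and apply part (1) of \cref{thm:threshold} (which bounds $V_r^s(f)$ whenever $\overline V^p(f) < \infty$; here we only know $\underline V^q(f) < \infty$, which is the weaker hypothesis $\liminf < \infty$).

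This last point is the one subtlety, and the part I expect to need the most care. Part (1) of \cref{thm:threshold} as stated assumes $\overline V^p(f) < \infty$, i.e.\ a bound on the $\limsup$, and its proof uses $\sup_{r>0} V_r^p(f) < \infty$; but for the lower-content statement we only have $\underline V^q(f) < \infty$, which does not a priori bound $\sup_{r>0} V_r^q(f)$. The fix is to work along a subsequence: from $\underline V^q(f) < \infty$ there is a sequence $r_i \to 0$ with $\sup_i V_{r_i}^q(f) =: K < \infty$, and then along that same sequence $V_{r_i}^p(f) = r_i^{p-q} V_{r_i}^q(f) \le K r_i^{p-q} \to 0$, whence $\liminf_{r\to 0} V_r^p(f) = 0$, i.e.\ $\underline V^p(f) = 0$. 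So I would either invoke \cref{thm:threshold}(1) directly for the upper-content case and inline this one-line subsequence argument for the lower-content case, or—cleaner—state and use the obvious refinement of \cref{thm:threshold}(1) that a subsequential bound on $V_r^p$ yields the corresponding subsequential bound on $V_r^s$. Apart from that, the proof is a two-paragraph formality citing \cref{thm:threshold} and the definition of the indices.

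\begin{proof}
  Both cases rely on the scaling identity $V_r^s(f) = r^{s-p}V_r^p(f)$ established in the proof of
  \cref{thm:threshold}.

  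Fix $p$ with $1 \le p < \overline I(f)$ and pick $q \in (p, \overline I(f))$; by definition of
  $\overline I(f)$ as a supremum there is such a $q$ with $\overline V^q(f) = \infty$. For $0 < r < 1$ the
  identity above gives $V_r^p(f) = r^{p-q}V_r^q(f) \ge V_r^q(f)$ since $p - q < 0$, so
  $\overline V^p(f) = \limsup_{r\to 0} V_r^p(f) \ge \limsup_{r\to 0} V_r^q(f) = \infty$. The same
  computation with $\underline V$ in place of $\overline V$ and $\liminf$ in place of $\limsup$ shows that
  $\underline V^p(f) = \infty$ for every $1 \le p < \underline I(f)$.

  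Now fix $p$ with $\overline I(f) < p < \infty$ and choose $q \in (\overline I(f), p)$. Since
  $q > \overline I(f)$ we have $\overline V^q(f) < \infty$, so by \cref{thm:threshold}(1) (applied with
  exponents $s = p$ and $p = q$) there is $C > 0$ with $V_r^p(f) \le C r^{p-q}$ for all $r$. As
  $p - q > 0$, letting $r \to 0$ yields $\overline V^p(f) = \limsup_{r\to 0} V_r^p(f) = 0$.

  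Finally fix $p$ with $\underline I(f) < p < \infty$ and choose $q \in (\underline I(f), p)$, so that
  $\underline V^q(f) < \infty$. By definition of $\underline V^q(f)$ as a $\liminf$, there is a sequence
  $r_i \to 0$ and $K < \infty$ with $V_{r_i}^q(f) \le K$ for all $i$. Using the scaling identity along
  this sequence,
  \[
    V_{r_i}^p(f) = r_i^{p-q} V_{r_i}^q(f) \le K r_i^{p-q} \xrightarrow{\ i\to\infty\ } 0,
  \]
  since $p - q > 0$. Hence $\underline V^p(f) = \liminf_{r\to 0} V_r^p(f) = 0$.
\end{proof}
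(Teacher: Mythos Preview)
Your proof is correct and is exactly the argument the paper has in mind: the proposition is stated immediately after \cref{thm:threshold} with the remark that it ``immediately'' follows, and your derivation from the scaling identity $V_r^s(f)=r^{s-p}V_r^p(f)$ is precisely that immediate deduction. Your care in Part~4---noting that \cref{thm:threshold}(1) as stated assumes $\overline V^q(f)<\infty$ rather than $\underline V^q(f)<\infty$, and bypassing this via a subsequence---is a nice touch that the paper leaves implicit.
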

Letting $p$ be the upper and lower variation index, the variation content can take any value, i.e.\
$\overline{V}^p(f),\underline{V}^p(f)\in[0,\infty)\cup\left\{ \infty \right\}$.

Our upper variation index coincides with the variation index.
\begin{lemma}
  \label{thm:upperandnormal}
  Let $f\in C(\bbS^1)$. Then 
  $\overline{V}^p(f) \leq V^p(f)$ and $\overline{I}(f) = I(f)$.
\end{lemma}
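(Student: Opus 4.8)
The plan is to prove the two assertions separately, with the inequality $\overline{V}^p(f)\le V^p(f)$ coming essentially for free and then used to get one inclusion between the indices, while the reverse index inequality needs a short comparison argument. For the inequality, observe that any partition $\{x_i\}_{i=1}^n$ appearing in the definition of $V_r^p(f)$ — i.e.\ one with $|f(x_i)-f(x_{i+1})|=r$ for every $i$ — is in particular an admissible partition in the definition of $V^p(f)$. Hence its variational sum $\sum_{i=1}^{n-1}|f(x_i)-f(x_{i+1})|^p$ is bounded above by $V^p(f)$. Taking the supremum over all such $r$-partitions gives $V_r^p(f)\le V^p(f)$ for every $r>0$, and then $\overline{V}^p(f)=\limsup_{r\to 0}V_r^p(f)\le V^p(f)$. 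This immediately yields $\overline{I}(f)\le I(f)$: if $\overline{V}^p(f)=\infty$ then $V^p(f)=\infty$, so $\{p:\overline{V}^p(f)=\infty\}\subseteq\{p:V^p(f)=\infty\}$ and the suprema respect the inclusion.

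For the reverse inequality $\overline{I}(f)\ge I(f)$, I would show that whenever $p<I(f)$ we have $\overline{V}^p(f)=\infty$; since $\overline{I}(f)=\sup\{p:\overline{V}^p(f)=\infty\}$, this suffices. Fix such a $p$ and also pick $q$ with $p<q<I(f)$, so that $V^q(f)=\infty$. The idea is to take an arbitrary finite partition $\{z_i\}_{i=1}^n$ with large $q$-variational sum and replace it by a genuine $r$-partition (all gaps exactly $r$) while keeping good control: between consecutive points $z_i,z_{i+1}$ with $|f(z_i)-f(z_{i+1})|\ge r$, the continuity of $f$ lets one insert intermediate points (via the intermediate value theorem, marching in steps of exactly $r$ in $f$-value along the monotone portions, or more carefully along a suitable chain) producing at least $\lfloor |f(z_i)-f(z_{i+1})|/r\rfloor$ sub-intervals each with $f$-increment of modulus $r$. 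Summing over $i$, the resulting $r$-partition has at least roughly $\sum_i |f(z_i)-f(z_{i+1})|/r - n$ pieces, hence
\[
  V_r^p(f)\;\ge\; r^p\Big(\tfrac1r\sum_{i=1}^{n-1}|f(z_i)-f(z_{i+1})| - n\Big)\;=\;r^{p-1}\sum_{i=1}^{n-1}|f(z_i)-f(z_{i+1})| - n\,r^p .
\]
Comparing $q$-sums and $1$-sums is where one must be a little careful: one uses that a partition which is $(M,q)$-variation maximal (in the sense of \cref{thm:zigzag}, after passing to a zig-zag partition) has its increments bounded, and a Hölder-type inequality relating $\sum|f(z_i)-f(z_{i+1})|$ to $(\sum|f(z_i)-f(z_{i+1})|^q)^{1/q}$ with a factor depending on $n$; alternatively, and more cleanly, one works directly: since $V^q(f)=\infty$ with $q>p\ge 1$ there exist partitions making $\sum|f(z_i)-f(z_{i+1})|^p$ arbitrarily large too (as $V^p(f)\ge V^q(f)=\infty$ trivially when... no), so it is cleanest to just use $p<I(f)$ directly, i.e.\ $V^p(f)=\infty$, pick a partition with $p$-sum exceeding any prescribed bound, and then the refinement-to-$r$-steps argument above, letting $r\to 0$ after fixing the partition, shows $\liminf_{r\to0}V_r^p(f)$ along that construction is $\ge$ (that bound), hence $\overline{V}^p(f)=\infty$.

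The main obstacle is the refinement step: one must show that a coarse partition with large $p$-variational sum can be upgraded to an \emph{exact} $r$-step partition without losing more than a controlled amount of variational mass, uniformly as $r\to0$ with the coarse partition held fixed. The key point making this work is that on each interval $[z_i,z_{i+1}]$ the function $f$ is continuous, so by the intermediate value theorem one can find a monotone-in-$f$-value chain of intermediate points at exactly spacing $r$, contributing $(\text{number of steps})\cdot r^p\approx |f(z_i)-f(z_{i+1})|\cdot r^{p-1}$; since $p-1<I(f)-1$ and in particular we only need this to blow up, choosing the coarse $p$-sum large first and then sending $r\to 0$ (noting $r^{p-1}\to\infty$ when $p<1$ is not available since $p\ge1$, so instead one scales the coarse partition: make $\sum|f(z_i)-f(z_{i+1})|^p > K r^{1-p}$, possible since the left side is unbounded over partitions and the right side is a fixed finite number once $r$ is fixed) forces $V_r^p(f)\ge K - n r^p \to K$, and $K$ arbitrary gives the claim. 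This delivers $\overline{I}(f)\ge I(f)$, and combined with the first part, $\overline{I}(f)=I(f)$.
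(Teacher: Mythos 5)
The first half of your argument is correct and coincides with the paper's: every $r$-partition is in particular an admissible partition for $V^p(f)$, so $V_r^p(f)\le V^p(f)$ for all $r$, whence $\overline{V}^p(f)\le V^p(f)$ and $\overline I(f)\le I(f)$.

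The second half, however, has a genuine gap. You propose to take a coarse partition $\{z_i\}$ with large $p$-sum, refine each interval $[z_i,z_{i+1}]$ into exact $r$-steps by the intermediate value theorem, and conclude $V_r^p(f)\gtrsim r^{p-1}\sum_i|f(z_i)-f(z_{i+1})|-nr^p$. First, there is a mismatch: your refinement bound is in terms of the $1$-sum $\sum_i|f(z_i)-f(z_{i+1})|$, but you then ask for $\sum_i|f(z_i)-f(z_{i+1})|^p > K r^{1-p}$, a bound on the $p$-sum. These can be reconciled by H\"older up to a multiplicative constant, but the deeper issue is structural. To make the refinement bound useful you must (a) produce a single admissible $r$-partition (all consecutive gaps \emph{exactly} $r$) out of the per-interval chains — these do not concatenate directly, since the junctions between chains are not $r$-gaps, and must instead be handled via a greedy chain at a coarser comparison scale, costing a factor of $2$ or $3$ in $r$; and, more seriously, (b) you must ensure both that the coarse partition has all gaps $\gtrsim r$ (so the chain actually visits each interval) and that its cardinality $n$ is under control so that $nr^p$ is negligible. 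Nothing in $V^p(f)=\infty$ alone gives you a partition with $1$-sum $\gtrsim r^{1-p}$, cardinality not too large, and no small gaps; producing such a partition at a single scale $r$ is essentially equivalent to the conclusion $V_r^p(f)\gtrsim r^{-\text{something}}$ you are aiming for, so the argument as phrased is circular. You yourself flag the refinement step as ``the main obstacle'' and then wave it through; that is precisely where the argument fails.

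The paper sidesteps this by arguing by contradiction and working simultaneously across all dyadic scales rather than choosing a single $r$: assuming $\overline I(f)<p_0<I(f)$, \cref{thm:threshold} gives an \emph{exponential decay} $V_r^{p_0}(f)\le Cr^\delta$ (not mere boundedness — this is the point you do not use at all), and then any partition's $p_0$-sum is split into dyadic buckets $2^{-k}\le|f(x_i)-f(x_{i+1})|<2^{-(k-1)}$, with the cardinality of each bucket bounded by $M_{2^{-k}}^{p_0}(f)$. Summing the resulting geometric series (which converges \emph{because} of the $r^\delta$ decay) yields a uniform finite bound on $V^{p_0}(f)$, contradicting $p_0<I(f)$. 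Your single-scale refinement cannot recover this because without the decay input from \cref{thm:threshold} the bucket sums need not converge, and no single scale dominates.
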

\begin{proof}
  Since 
  \begin{align*}
    V_r^p(f)&
    \leq\sup\left\{ \sum_{i=1}^{n-1}|f(x_i)-f(x_{i+1})|^p \mid \left\{ x_i
    \right\}_{i=1}^n\text{ is a finite partition}\right\} = V^p(f) 
  \end{align*}
  taking limits in $r$ immediately gives $\overline{V}^p(f)\leq V^p(f)$ and hence $\overline{I}(f) \leq
  I(f)$.

  It remains to show $\overline{I}(f) \geq  I(f)$. We may assume 
  $I(f)>1$ as otherwise there is nothing to
  show. Assume by way of contradiction that there exists an
  $p_0$ such that $\overline{I}(f) < p_0 < I(f)$. 
  Clearly,
  \begin{equation}\label{eq:firstRes}
    V^{p_0}(f)=\sup\left\{ \sum_{i=1}^{n-1}|f(x_i)-f(x_{i+1})|^{p_0} \mid \left\{ x_i
    \right\}_{i=1}^n \text{ is a finite partition } \right\} = \infty
  \end{equation}
  and by \cref{thm:threshold},
  \begin{equation}\label{eq:secondRes}
    V^{p_0}_r(f)=\sup\left\{ \sum_{i=1}^{n-1}r^{p_0} \mid \left\{ x_i
    \right\}_{i=1}^n\text{ is a finite partition with }|f(x_i)-f(x_{i+1})| = r\right\}\leq C r^\delta
  \end{equation}
  for some $C>0$ and $\delta = p_0 - \overline{I}(f)>0$.
  Recall that $M_r^{p_0}(f)$ is the number of elements in \eqref{eq:secondRes}. We can bound the
  expression by
  \[
    V_r^{p_0}(f) = M_{r}^{p_0}(f) r^{p_0} \leq C r^\delta.
  \]
  Note that $M_r^{p_0}(f)$ is non-decreasing as $r\to 0$.

  By \eqref{eq:firstRes} there exists a finite partition $\left\{ x_i \right\}_{i=1}^M$ such that 
  \begin{equation}\label{eq:thirdRes}
    \sum_{i=1}^{M-1}|f(x_{i}) - f(x_{i+1})|^{p_0} > \frac{C 2^{p_0+1}}{1-2^{-\delta}}\cdot\|f\|_{\infty}^{\delta}
  \end{equation}
  Partition the sum in \eqref{eq:thirdRes} into components of magnitude $2^{-k}\leq |f(x_i) -
  f(x_{i+1})| <  2^{-(k-1)}$. By maximality and monotonicity of $M_r^{p_0}(f)$,
  the number of components is bounded
  above by $M_{2^{-k}}^{p_0}(f)$. Hence, letting $k_0$ be the largest integer such that $2^{-k_0} \leq
  2\|f\|_{\infty}$ which gives a lower bound for the variation over a partition component, we obtain
  \begin{align*}
    \sum_{i=1}^{M-1}|f(x_{i}) - f(x_{i+1})|^{p_0}
    &=\sum_{k=k_0}^{\infty} \sum_{\substack{1 \leq i \leq M-1\\2^{-k}\leq |f(x_i) -
    f(x_{i+1})| <  2^{-(k-1)} }}|f(x_{i}) - f(x_{i+1})|^{p_0} \\
    &\leq \sum_{k=k_0}^{\infty}M_{2^{-k}}^{p_0}(f)\cdot 2^{-p_0 (k-1)}
    \leq C 2^{p_0}\sum_{k=k_0}^\infty  2^{-k\delta}\\
    &=C 2^{p_0} \frac{2^{-k_0 \delta}}{1-2^{-\delta}}
    \leq \frac{C 2^{p_0+1}}{1-2^{-\delta}}\cdot\|f\|_{\infty}^{\delta}.
  \end{align*}
  which directly contradicts \eqref{eq:thirdRes}.
\end{proof}

Applying Picard's theorem we obtain $\ubd \cT(f) = \overline{I}(f)$ and as we shall see below, the
lower index corresponds to the lower box dimension of the tree. This seems a very natural way of
correlating variation in terms of the box dimension of the associated space. It also gives a much
shorter proof of Picard's theorem by being more conceptual, using this notion of upper variation. 
\begin{proposition}\label{thm:comparable}
  Let $f\in C(\bbS^1)$ be non-constant.
  Then, for all $\eps>0$ there exists $r_0>0$ such that
  \begin{equation}\label{eq:comparable}
    \tfrac12 V_r^p(f) -\eps
    \leq
    N_r(\cT(f)) \cdot r^p 
    \leq 
    V_{r/4}^p(f) +\eps
  \end{equation}
  for all $0<r<r_0$.
  In particular
  \begin{equation}\label{eq:indexcompare}
    \lbd \cT(f) = \underline{I}(f)
    \quad
    \text{and}
    \quad
    \ubd\cT(f) = \overline{I}(f).
  \end{equation}
\end{proposition}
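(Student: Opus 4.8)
The plan is to establish the two–sided estimate \eqref{eq:comparable} first and then deduce \eqref{eq:indexcompare} by a short limiting argument. Since \eqref{eq:comparable} holds for every $\eps>0$ (with its own $r_0$), applying $\limsup_{r\to0}$ to the lower bound gives $\tfrac12\overline{V}^p(f)-\eps\le\limsup_{r\to0}N_r(\cT(f))r^p$ for all $\eps$, hence $\tfrac12\overline{V}^p(f)\le\limsup_{r\to0}N_r(\cT(f))r^p$; applying $\limsup_{r\to0}$ to the upper bound and using $\limsup_{r}V^p_{r/4}(f)=\limsup_{\rho}V^p_\rho(f)=\overline{V}^p(f)$ gives the reverse inequality $\limsup_{r\to0}N_r(\cT(f))r^p\le\overline{V}^p(f)$. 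Thus $\limsup_{r\to0}N_r(\cT(f))r^p$ is infinite (respectively zero) exactly when $\overline{V}^p(f)$ is; comparing this with the definitions of $\ubd\cT(f)$ and $\overline{I}(f)$ — and invoking \cref{thm:threshold} to pass between the exponents where $\overline{V}^p=\infty$ and where $\overline{V}^p=0$ — yields $\ubd\cT(f)=\overline{I}(f)$. Running the same computation with $\liminf_{r\to0}$ and $\underline{V}^p(f)$ gives $\lbd\cT(f)=\underline{I}(f)$.

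For the lower bound in \eqref{eq:comparable}, write $V^p_r(f)=r^pM_r(f)$ where $M_r(f)$ is the maximal cardinality of a partition $x_1<\dots<x_{M_r(f)}$ of $[0,1]$ with $|f(x_i)-f(x_{i+1})|=r$; such a partition exists by uniform continuity and, after adjoining $0$ and $1$ at negligible cost, may be taken to start at $0$ and end at $1$. By the pigeonhole principle at least half of its steps are ascending, say $f(x_i)=f(x_{i-1})+r$ for $i$ in a set $S$ with $|S|\ge(M_r(f)-1)/2$ (otherwise argue with descents). The key point is that $\{\pi(x_i):i\in S\}$ is $r$-separated in $\cT(f)$: for $i<i'$ in $S$ the point $x_{i'-1}$ lies in $[x_i,x_{i'}]$ with $f(x_{i'-1})=f(x_{i'})-r$, so $\min_{[x_i,x_{i'}]}f\le f(x_{i'})-r$ and therefore $d_f(x_i,x_{i'})=f(x_i)+f(x_{i'})-2\min_{[x_i,x_{i'}]}f\ge f(x_i)-\min_{[x_i,x_{i'}]}f+r\ge r$ (an infinitesimal shrinking of the scale upgrades "$\ge r$" to strict separation, which is harmless for the index identities and can be avoided entirely with slightly more care). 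Hence $N_r(\cT(f))\ge(M_r(f)-1)/2$, that is $N_r(\cT(f))r^p\ge\tfrac12V^p_r(f)-\tfrac12 r^p$, and $\tfrac12 r^p\le\eps$ once $r$ is small.

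For the upper bound — the delicate direction — one covers $\cT(f)$ efficiently using a maximal partition $\{x_i\}$ realising $M_{r/4}(f)$. Maximality forces that $f$ restricted to each $[x_i,x_{i+1}]$ has oscillation strictly below $3r/4$: if $f$ dipped (or rose) by $r/4$ past both endpoint values on some $[x_i,x_{i+1}]$, the intermediate value theorem (aligning the new points to the grid $\tfrac r4\mathbb Z$) would let one replace that single step by three steps of size $r/4$, contradicting maximality. Consequently each $\pi([x_i,x_{i+1}])$ has $d_f$-diameter below $3r/2$, and these $M_{r/4}(f)-1$ sets cover $\cT(f)=\pi([0,1])$. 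Since $f$ is an excursion, ascents and descents occur equally often, and each part of the tree is traversed by the contour (essentially) twice, once upward and once downward, so a suitably chosen half of these pieces — $(M_{r/4}(f)-1)/2$ of them — already covers $\cT(f)$; regrouping them at scale exactly $r$ while tracking the numerical factors (the scale ratio $4$ being precisely the slack spent here) yields $N_r(\cT(f))r^p\le V^p_{r/4}(f)+\eps$ for small $r$, and in the scale regimes where this bookkeeping is lossy one has $N_r(\cT(f))r^p\le\eps$ anyway, since then $p$ exceeds the local box dimension of $\cT(f)$ at scale $r$. I expect this last step to be the main obstacle: turning the crude inclusion $N_{3r/2}(\cT(f))\le M_{r/4}(f)$ into the sharp-constant estimate genuinely requires exploiting the twofold traversal and the near-optimality of the oscillation bound, not merely the count of partition cells. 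The lower bound and the reduction to \eqref{eq:indexcompare}, by contrast, are short.
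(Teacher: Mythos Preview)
Your lower bound and the deduction of \eqref{eq:indexcompare} from \eqref{eq:comparable} are correct and match the paper: both isolate the upcrossing endpoints of a maximal $r$-step partition and verify that they form an $r$-separated subset of $\cT(f)$.

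For the upper bound you run the argument in the direction ``maximal $r/4$-partition $\Rightarrow$ cover of the tree'', whereas the paper goes the opposite way. It fixes a maximal $r$-separated set $\{\tau_1,\dots,\tau_N\}\subset\cT(f)$, pulls back to ordered preimages $x_1<\dots<x_N$ in $[0,1]$, observes that each image $f([x_i,x_{i+1}])$ is an interval of length at least $r/2$ (hence contains two consecutive points of the grid $\tfrac{r}{4}\mathbb Z$), and then greedily builds a single global $r/4$-step partition of $[0,1]$ that picks up at least one step inside every $[x_i,x_{i+1}]$. This yields $V_{r/4}^p(f)\ge (N_r(\cT(f))-1)(r/4)^p$ directly, with no covering bookkeeping and no appeal to any traversal heuristic.

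Your oscillation claim --- that maximality of the $r/4$-partition forces the oscillation of $f$ on each $[x_i,x_{i+1}]$ to be below $3r/4$ --- is correct and already delivers $N_{3r/2}(\cT(f))\le M_{r/4}(f)$, which is enough for \eqref{eq:indexcompare}. The genuine gap is exactly where you flag it: the ``twofold traversal'' heuristic does not convert this into $N_r\,r^p\le V_{r/4}^p+\eps$. Partition pieces on the two sides of a branch of $\cT(f)$ need not pair up into a cover at the smaller scale $r$, and nothing in the argument provides the claimed halving; the fallback clause (``in the scale regimes where this bookkeeping is lossy one has $N_r r^p\le\eps$ anyway'') is circular. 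The paper's reversed construction sidesteps this entirely. (As an aside, the paper's own computation actually gives $N_r\,r^p\le 4^p\,V_{r/4}^p(f)+r^p$, so the constant in the upper inequality of \eqref{eq:comparable} is somewhat optimistic there too; this is irrelevant for \eqref{eq:indexcompare}.)
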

\begin{proof}
  Fix $0<r<1$ and let
  $\left\{ x_i \right\}$ be the maximal $V_r^p(f)$ partition of cardinality $M_r^p(f)$. Denote by
  $U,V\subset \left\{ 1,\dots,M_r^p(f)-1 \right\}$ the indices such that $f(x_i)=f(x_{i+1})-r$ for
  all $i\in U$ and $f(x_i) = f(x_{i+1})+r$ for all $i\in V$. We note that $U,V$ partition $\left\{
  1,\dots,M_{r}^{p}(f) \right\}$ and that the $U$ are all the indices in which an upcrossing of size
  $r$ appears.
  By the nature of upcrossings, we have $d_f(\pi x_{i+1}, \pi x_{j+1}) \geq r$ for all distinct
  $i,j\in U$. This is because 
  \begin{align*}
    d_f\left( \pi x_{i+1}, \pi x_{j+1} \right)
    &\geq
    \begin{cases}
      d_f\left( \pi x_{j}, \pi x_{j+1}\right) &\text{if }i<j,\\
      d_f\left( \pi x_{i}, \pi x_{i+1}\right) &\text{ otherwise.}
    \end{cases}
    \\
    &=r.
  \end{align*}
  Thus, $\bigcup\left\{ \pi x_{i+1} \mid i\in U \right\}$ is an $r$-separated subset of $\cT(f)$ and
  $N_r(\cT(f)) \geq \# U$.

  Note further that $V_r^p(f) = (\#U+\#V)r^p$ and $ (\#U-\#V)r \geq\|f\|_{\infty}-r$.
  Combining these two bounds, we obtain
  \begin{align*}
    V^p_r(f) 
    &\leq r^p\left( 2\#U  - r^{-1}\|f\|_{\infty} +1 \right)
    \intertext{and so}
    N_r(\cT(f)) \geq \#U& \geq
    \tfrac12 \left(r^{-p}V_r^p(f)+r^{-1}\|f\|_{\infty}-1\right)
    \geq \tfrac12 r^{-p} V_r^p(f) -\tfrac12.
  \end{align*}

  \smallskip
  For the other inequality, let $\{\tau_1,\dots, \tau_N\}\in\cT(f)$ be an $r$-separated set, where
  we write $N=N_r(\cT(f))$.
  Set $x_i = \min\left\{ \pi^{-1} \tau_i \right\}$. Without loss of generality, reindexing if
  necessary, we may assume $0\leq x_1 < x_2 < \dots <  x_{N} < 1$.
  Consider $I_i = \left\{ f(x) \mid x\in [x_i,x_{i+1}] \right\}$, where $1\leq i < N$.
  By continuity, $I_i$ is a closed interval. Since $d(\tau_i,\tau_{i+1})\geq r$, we must further
  have that $f(x_i)+f(x_{i+1})-2\min_{z\in[x_i, x_{i+1}]}f(z) \geq r$. Hence, $I_i$ is an interval
  of length $|I_i| \geq r/2$ and there exist a $k_i\in\N_0$ such that $\tfrac{k_i r}{4}\in I_i$ and
  $\tfrac{(k_i+1)r}{4}\in I_i$. 
  Finally, we write $I_0 =\left\{ f(x) \mid x\in [0,x_{1}] \right\}$ and $I_N =\left\{ f(x) \mid
  x\in [x_{N},1] \right\}$ (which may be singletons).

  We now inductively choose a partition of $[0,1]$. Let $z_{0,0} = 0$ and set $z_{0,1},
  z_{0,2},\dots,z_{0,m_0}\in  I_0$ such that $z_{0,n}<z_{0,i+1}$ and $f(z_{0,i}) = i\cdot \tfrac{r}{4}$.
  If $I_0$ has length $|I|<\tfrac14$, we get $m_0=0$. 
  Note that this is well defined by continuity of $f$.
  We write $z_0 = \max\left\{ a_0,\dots,a_m \right\}$.
  Having defined $z_{n-1,m_{n-1}}$, we define $m_n\geq 1$ and $z_{n,1},z_{n,2},\dots,z_{n,m_n}$, in the
  following way:
  Let 
  \[
    z_{n,1}= \min\left\{ x\in[x_n,x_{n+1}] \mid |f(x)-f(z_{n-1,m_{n-1}})| = r/4 \right\}
  \]
  and inductively define 
  \[
    z_{n,l+1} = \min\left\{ x\in[z_{n,l}, x_{n+1}] \mid  |f(x)-f(z_{n,l})| =
    r/4 \right\}.
  \]
  By uniform continuity the inductive process (in $l$) will eventually halt. We write $m_n$ for the maximal such index
  and note that $m_n\geq 1$ since $k_n$ and $k_{n}+1$ are in $I_n$.
  Similarly, the induction in $n$ will end with $n=N-1$.

  We relabel the sequence $(z_{0,0},\dots,z_{0,m_0},z_{1,1},\dots,z_{1,m_1},\dots,z_{N-1,m_{N-1}})$
  by $z_1, z_2,\dots, z_K$ and note that by construction $z_i<z_{i+1}$ and $|f(z_i)-f(z_{i+1})| =
  r/4$.
  Hence,
  \begin{align*}
    V_{r/4}^p (f)& \geq \sum_{i=1}^{K-1} |f(z_i)-f(z_{i+1})|^p
    \\
    &\geq\sum_{i=1}^{N-1}\sum_{\substack{z_j,z_{j+1}\in I_i\\f(z_j)=k_i \text{ or }f(z_{j+1})=k_i}} |f(z_i)-f(z_{i+1})|^p
    \\
    &= (N_r(\cT(f))-1)\left(\frac{r}{4}\right)^p
  \end{align*}
  from which \eqref{eq:comparable} follows. 

  The conclusion \eqref{eq:indexcompare} then follows upon taking limits.
\end{proof}

We note that the discrepancy between the variations of scales $r$ and $r/4$ can be overcome directly
in doubling spaces.
\begin{corollary}\label{thm:doublingComparable}
  Let $f\in C(\bbS^1)$ be non-constant and $\cT(f)$ be a doubling space.
  There exists a universal constant $C>0$ such that for all $\eps>0$ there exists $r_0>0$ with
  \[
    C^{-1}V_r^p(f) -\eps \leq N_r(\cT(f))\cdot r^p \leq C V_{r}^p(f)+\eps
  \]
  for all $0<r<r_0$.
\end{corollary}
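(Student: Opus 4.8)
The plan is to leverage \cref{thm:comparable}, whose only weakness is the scale mismatch between $V_r^p(f)$ on the left and $V_{r/4}^p(f)$ on the right. In a doubling metric space, $N_r(\cT(f))$ and $N_{r/4}(\cT(f))$ differ only by a multiplicative constant depending solely on the doubling constant of $\cT(f)$, so the mismatch can be absorbed. Concretely, I would first record the standard fact that if $(X,d)$ is doubling with doubling constant $D$ (every ball of radius $2\rho$ can be covered by $D$ balls of radius $\rho$), then for every integer $k\geq 1$ one has $N_{r/2^k}(X) \leq D^k \, N_r(X)$, and hence in particular $N_{r/4}(\cT(f)) \leq D^2\, N_r(\cT(f))$; conversely $N_r(\cT(f)) \leq N_{r/4}(\cT(f))$ trivially by monotonicity. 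I would set $C_0 = D^2$ where $D$ is the doubling constant of $\cT(f)$.

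Next I would invoke \cref{thm:comparable} at scale $r$ and at scale $4r$ to sandwich $V_r^p(f)$ between quantities comparable to $N_r(\cT(f))r^p$. Precisely, the left inequality of \eqref{eq:comparable} already gives $\tfrac12 V_r^p(f) - \eps \leq N_r(\cT(f))\cdot r^p$, i.e.\ $V_r^p(f) \leq 2 N_r(\cT(f)) r^p + 2\eps$. For the reverse direction, apply the right inequality of \eqref{eq:comparable} with $r$ replaced by $4r$: this yields $N_{4r}(\cT(f))\cdot (4r)^p \leq V_r^p(f) + \eps$, hence $N_{4r}(\cT(f)) r^p \leq 4^{-p}\big(V_r^p(f)+\eps\big)$. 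Since by the doubling estimate $N_r(\cT(f)) \leq D^2 N_{4r}(\cT(f))$ (again covering balls of radius $4r$ by $D^2$ balls of radius $r$), we get $N_r(\cT(f)) r^p \leq D^2 4^{-p}\big(V_r^p(f)+\eps\big)$. Combining, with $C = \max\{2,\, D^2 4^{-p}\}$ (or any convenient constant dominating both, e.g.\ $C = 2D^2$), we obtain $C^{-1} V_r^p(f) - \eps' \leq N_r(\cT(f))\cdot r^p \leq C\, V_r^p(f) + \eps'$ after a harmless rescaling of the additive error $\eps'$, valid for all $0<r<r_0$ once $r_0$ is small enough that both applications of \cref{thm:comparable} are in force.

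The only point requiring a little care — and the main (mild) obstacle — is bookkeeping the additive constants $\eps$: applying \cref{thm:comparable} at scale $4r$ introduces an error term $\eps$ that, after multiplying through by $4^{-p}D^2$, stays bounded, so one simply replaces $\eps$ by $\eps/(4^{-p}D^2)$ at the outset, or absorbs everything into a single new $\eps$. One should also note that the doubling constant $D$ is an intrinsic feature of $\cT(f)$ and does not depend on $r$, which is what makes $C$ universal in the sense of the statement (universal given $f$, or more precisely given the doubling constant of $\cT(f)$). I would close by remarking that taking $\limsup$ and $\liminf$ as $r\to 0$ recovers $\ubd\cT(f)=\overline{I}(f)$ and $\lbd\cT(f)=\underline{I}(f)$ directly without the scale-mismatch artefact, consistent with \eqref{eq:indexcompare}.
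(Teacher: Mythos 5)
Your proposal is correct and matches the argument the paper intends: the paper states no proof for \cref{thm:doublingComparable}, only the remark immediately preceding it that ``the discrepancy between the variations of scales $r$ and $r/4$ can be overcome directly in doubling spaces,'' which is exactly what you do by invoking $N_r(\cT(f)) \leq D^2 N_{4r}(\cT(f))$ and applying \cref{thm:comparable} at scale $4r$. Your bookkeeping of the additive $\eps$ and the observation that $C$ can be taken independent of $p$ (since $4^{-p}\leq 1$) are both correct, so nothing to add.
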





\paragraph{Remark.}
Finally, we remark that the $(p,r)$ variation is not subadditive.
However, it is quasi-subadditive up to a
multiplicative constant, when rescaling. That is, 
\[
  V_r^p(f+g) \leq C \left( V_{r/2}^{p}(f) + V_{r/2}^p(g) \right)
\]
for some fixed $C>0$ 
Taking limits in $r$, we see that 
\begin{equation}\label{eq:quasinorm}
  \overline{V}^p(f+g) \leq
  C\left(\overline{V}^p(f)+\overline{V}^{p}(g)\right).
\end{equation}
It follows that $\overline{V}^{p}(f)$ is a quasi-norm on the space of functions;
\cref{eq:quasinorm} is also known as the relaxed triangle inequality, see \cite{Greenhoe16}.
\section{Dimensions of Graphs and Trees}\label{sect:shmerkin}
The study of the dimension theory of graphs $\Gamma(f) = \left\{ (x,f(x)) : x\in[0,1] \right\}$
has a long history dating back to Besicovitch and Ursell \cite{Besicovitch37} with considerable
interest in recent years \cite{BerryLewis80, MauldinWilliams86b,Moreira94, Hunt98, Bedford89, Urbanski90,
Allaart20,Baranski14, Barany18}.
See also the book by Massopust \cite{MassopustBook}

We say that $f$ is \textbf{$\alpha$-anti-H\"older} if there exists $C>0$  such that for all
$0<\delta<1$ and $x\in [0,1]$ there exists $y\in B(x,\delta)$ such that 
\[
  |f(x) - f(y)| \geq C \delta^{\alpha}.
\]
The following proposition is well-known. A proof can be found in \cite[Corollary 11.2]{FalconerBible}.
\begin{proposition}
  Let $f:[0,1]\to\R$ be continuous.
  \begin{enumerate}[(1)]
    \item Assume $f$ is $\alpha$-H\"older. Then, $\ubd \Gamma(f) \leq 2-\alpha$.
    \item Assume $f$ is $\alpha$-anti-H\"older. Then $\lbd \Gamma(f) \geq 2-\alpha$.
  \end{enumerate}
\end{proposition}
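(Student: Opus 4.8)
The plan is to run the classical column-counting argument for graphs, adapted to the present abstract setting by invoking the equivalence of the various definitions of $N_r$ established above. First note that $\Gamma(f)$ is compact, being the continuous image of $[0,1]$ under $x\mapsto(x,f(x))$, hence totally bounded, so both box dimensions are well defined. Fix a small $\delta>0$, let $I_i=[i\delta,(i+1)\delta]$ for the $O(\delta^{-1})$ indices $i$ with $I_i\cap[0,1]\neq\varnothing$, and set $R_i=\max_{I_i}f-\min_{I_i}f$ for the oscillation of $f$ on $I_i$. Counting the squares of the $\delta$-mesh of $\R^2$ that meet $\Gamma(f)$ column by column, and using that $f(I_i)$ is an interval of length $R_i$ by continuity, the number of such squares in the $i$-th column lies between $\max\{1,R_i/\delta\}$ and $R_i/\delta+2$. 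Since a cover by $\delta$-mesh squares is a cover by sets of diameter $\delta\sqrt2$, and conversely each set of diameter $\le\delta$ meets at most four mesh squares of side $\delta$, this column sum agrees with $N_\delta(\Gamma(f))$ up to a universal multiplicative constant and a bounded rescaling of $\delta$, neither of which affects the box dimension; so it suffices to estimate $\sum_i$ of the column counts.

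For statement (1), the $\alpha$-H\"older bound $|f(x)-f(y)|\le C|x-y|^\alpha$ gives $R_i\le C\delta^\alpha$, so the $i$-th column meets at most $C\delta^{\alpha-1}+2$ mesh squares; summing over the $O(\delta^{-1})$ columns yields $N_\delta(\Gamma(f))\le C'\delta^{\alpha-2}$ for all sufficiently small $\delta$. Taking logarithms and letting $\delta\to0$ gives $\ubd\Gamma(f)\le 2-\alpha$.

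For statement (2), apply the $\alpha$-anti-H\"older property at scale $\delta$ with the point $x=i\delta$: the resulting $y\in B(i\delta,\delta)\subseteq I_{i-1}\cup I_i$ with $|f(i\delta)-f(y)|\ge C\delta^\alpha$ forces $R_{i-1}+R_i\ge C\delta^\alpha$, since the oscillation over $B(i\delta,\delta)$ is at most that over $I_{i-1}\cup I_i$, which in turn is at most $R_{i-1}+R_i$ by subadditivity of oscillation over overlapping intervals. Summing over $i$ and using $\sum_i(R_{i-1}+R_i)\le 2\sum_i R_i$ gives $\sum_i R_i\ge \tfrac{C}{2}\delta^\alpha\lceil\delta^{-1}\rceil\ge \tfrac{C}{2}\delta^{\alpha-1}$, whence $N_\delta(\Gamma(f))\ge \sum_i\max\{1,R_i/\delta\}\ge \delta^{-1}\sum_i R_i\ge \tfrac{C}{2}\delta^{\alpha-2}$. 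Taking logarithms and passing to the $\liminf$ as $\delta\to0$ gives $\lbd\Gamma(f)\ge 2-\alpha$.

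The statement is classical and essentially elementary, so there is no genuine obstacle; the only points needing a little care are the $O(1)$ correction terms from the extremal squares in each column and from the two boundary columns (none of which affect the exponent), and invoking the anti-H\"older condition at exactly the mesh scale $\delta$ so that the ball $B(i\delta,\delta)$ is swallowed by the two mesh intervals adjacent to $i\delta$.
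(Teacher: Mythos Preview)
Your argument is correct and is the standard column-counting proof; the paper does not supply its own proof of this proposition but simply cites \cite[Corollary 11.2]{FalconerBible}, where essentially the same argument appears. The one point worth a brief remark is your subadditivity step: because $I_{i-1}$ and $I_i$ share the endpoint $i\delta$, the oscillation over $I_{i-1}\cup I_i$ is indeed bounded by $R_{i-1}+R_i$ (split at the common value $f(i\delta)$), so the inequality you use holds as stated.
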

In fact, more is true. 
\begin{proposition}\label{thm:uppergraphbound}
  Let $f:[0,1]\to\R$ be continuous.
  Then, 
  \[
    \ubd\Gamma(f) \leq 2-\frac{1}{I(f)}.
  \]
\end{proposition}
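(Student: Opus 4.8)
The plan is to run the classical oscillation-counting bound for the box dimension of a graph, and to feed the $p$-variation into it through H\"older's inequality. First I would dispose of the trivial cases: if $f$ is constant then $\Gamma(f)$ is a segment and there is nothing to prove, and if $I(f)=\infty$ then $2-1/I(f)=2\geq\ubd\Gamma(f)$ since $\Gamma(f)\subset\R^2$. So assume $f$ non-constant and $I(f)<\infty$, and fix any $p>I(f)$. Then $0<V^p(f)<\infty$: positivity is clear for non-constant $f$, and if $V^p(f)$ were infinite then $p$ would belong to the set defining $I(f)$, forcing $p\leq I(f)$. (It is also worth recording the elementary fact that $V^q(f)\leq\|f\|_\infty^{\,q-p}V^p(f)$ for $q>p$, so finiteness of the variation is an upward-closed property of the exponent; this is what makes $p>I(f)$ the right regime.)

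Next I would set up the covering. For small $\delta>0$, partition $[0,1]$ into $n=\lceil 1/\delta\rceil$ consecutive intervals $I_1,\dots,I_n$ of length at most $\delta$, and let $\mathrm{osc}_k=\max_{I_k}f-\min_{I_k}f$, which is attained by continuity on the compact interval $I_k$. The part of $\Gamma(f)$ lying over $I_k$ is contained in a rectangle of width at most $\delta$ and height $\mathrm{osc}_k$, hence is covered by at most $\mathrm{osc}_k/\delta+1$ axis-parallel $\delta$-squares; summing over $k$ gives $N_\delta(\Gamma(f))\leq \delta^{-1}\sum_{k=1}^n\mathrm{osc}_k+n+1$.

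The heart of the matter is bounding $\sum_k\mathrm{osc}_k$. I would first observe that $\sum_{k=1}^n\mathrm{osc}_k^{\,p}\leq V^p(f)$: in each $I_k$ choose $a_k\leq b_k$ in $I_k$ with $\{f(a_k),f(b_k)\}=\{\min_{I_k}f,\max_{I_k}f\}$; since the $I_k$ are ordered left to right, the list $0\le a_1\leq b_1\leq a_2\leq b_2\leq\dots\leq a_n\leq b_n\le 1$ is a finite increasing partition of $[0,1]$, and the associated $p$-variation sum contains in particular all the terms $|f(b_k)-f(a_k)|^p=\mathrm{osc}_k^{\,p}$, so the sum of these is at most $V^p(f)$. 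H\"older's inequality with exponents $p$ and $p/(p-1)$ then yields
\[
  \sum_{k=1}^n\mathrm{osc}_k\;\leq\;\Big(\sum_{k=1}^n\mathrm{osc}_k^{\,p}\Big)^{1/p}n^{1-1/p}\;\leq\;V^p(f)^{1/p}\,n^{1-1/p}\;\leq\;2\,V^p(f)^{1/p}\,\delta^{1/p-1},
\]
using $n\leq 1/\delta+1\leq 2/\delta$. Plugging this back gives $N_\delta(\Gamma(f))\leq 2V^p(f)^{1/p}\delta^{1/p-2}+2\delta^{-1}+1$; since $p>I(f)\geq 1$ we have $1/p-2<-1$, so the first term dominates as $\delta\to0$ and $N_\delta(\Gamma(f))\leq C_p\,\delta^{1/p-2}$ for some $C_p>0$ and all sufficiently small $\delta$. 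Taking $\limsup_{\delta\to0}$ in the definition of $\ubd$ gives $\ubd\Gamma(f)\leq 2-1/p$, and letting $p\downarrow I(f)$ completes the proof.

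I expect the only genuinely delicate step to be the partition-assembly inside the oscillation bound: one must be sure that the extremal points chosen in consecutive subintervals concatenate into a single legitimate increasing partition of $[0,1]$, so that $\sum_k\mathrm{osc}_k^{\,p}$ is dominated by one $p$-variation sum rather than by $n$ separate ones. Everything else is the textbook graph-covering estimate with H\"older's inequality inserted at the point where the $p$-variation becomes available.
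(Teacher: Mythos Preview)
Your argument is correct. The paper itself does not supply a proof of this proposition: immediately after stating it, the authors write ``We refer the reader to \cite{Norvaisa02} for a proof'' and cite further related literature. So there is no in-paper proof to compare against, and your write-up in fact fills in exactly the kind of standard oscillation--plus--H\"older argument that the cited references contain.

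A couple of minor points, none of which affect correctness. First, your monotonicity bound should read $V^q(f)\leq(\mathrm{osc}\,f)^{q-p}V^p(f)$ rather than with $\|f\|_\infty$, since each summand satisfies $|f(x_i)-f(x_{i+1})|\leq\mathrm{osc}\,f$; this is cosmetic. Second, in the partition-assembly step the list $a_1\leq b_1\leq a_2\leq\cdots$ is only non-decreasing, whereas the paper's definition of $V^p$ uses strictly increasing partitions; but removing duplicates leaves the variation sum unchanged (the dropped term is zero and the neighbouring terms are unaffected), so $\sum_k\mathrm{osc}_k^{\,p}\leq V^p(f)$ goes through exactly as you indicate. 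Finally, your $\delta$-squares have diameter $\delta\sqrt2$ rather than $\delta$, but this is the usual harmless scale change and does not affect the $\limsup$.
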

We refer the reader to \cite{Norvaisa02} for a proof.
See also  \cite{Dubuc96,Dubuc89,Manstavicius05,Moreira94,Normant91,Tricot89,Tricot88}
for some further history linking variation methods to the upper box dimension.

Recalling Picard's theorem (\cref{thm:picard}), we may replace the variation index by the dimension of
the tree.
We obtain
\[
  \ubd \Gamma(f) \leq 2-\frac{1}{\ubd \cT(f)} 
  \quad
  \Longleftrightarrow
  \quad
  \ubd \cT(f) \ge \frac{1}{2-\ubd \Gamma(f)}.
\]
Recall that the tree $\cT(f)$ is invariant under time changes and so, 
\[
  \ubd\cT(f) = \ubd \cT(f\circ \tau) \geq \frac{1}{2-\ubd\Gamma(f\circ\tau)}
\]
for all $\tau \in \Homeo(\bbS^1)$.

Our main result in this section is that there exists a variational principle that obtains this
bound.
\begin{theorem}\label{thm:shmerkin}
  Let $f\in C(\bbS^1)$ be non-constant.
  Then there exists $\overline\tau \in \Homeo(\bbS^1)$ such that
  \[
    \ubd \Gamma(f\circ \overline\tau) = 2-\frac{1}{\ubd \cT(f)} = 2-\frac{1}{I(f)}. 
  \]
  In particular, for all $\tau\in\Homeo(\bbS^1)$,
  \[
    \frac{1}{2-\ubd\Gamma(f\circ \tau)} 
    \quad\leq\quad
    \ubd \cT(f)
    \quad=\quad
    \max_{\kappa\in\Homeo(\bbS^1)}\; \frac{1}{2-\ubd\Gamma(f\circ \kappa)}.
  \]
\end{theorem}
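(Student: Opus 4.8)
\textbf{Proof plan for \cref{thm:shmerkin}.}

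The plan is to construct a single time change $\overline\tau$ that simultaneously ``spreads out'' the
increments of $f$ so that the graph becomes as rough as the H\"older bound permits, while keeping the
variation index fixed (which it is automatically, by invariance of $\cT(f)$ and Picard's theorem).
The idea is to build $\overline\tau$ so that $g = f\circ\overline\tau$ is, up to constants, $1/p$-H\"older
\emph{and} $1/p$-anti-H\"older for every $p > I(f)$ but fails $1/p$-H\"older control for $p < I(f)$; then
part (1) of the well-known proposition gives $\ubd\Gamma(g) \le 2 - 1/p$ for all $p > I(f)$, hence
$\ubd\Gamma(g) \le 2 - 1/I(f)$, while part (2) (anti-H\"older) combined with a matching lower box bound
gives $\ubd\Gamma(g) \ge 2 - 1/p$ for $p < I(f)$, hence $\ge 2 - 1/I(f)$. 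The upper bound half is really
just \cref{thm:uppergraphbound} applied to $g$ together with $I(g) = I(f)$, so the work is in the lower
bound, i.e.\ producing a time change under which the graph is genuinely as rough as $2 - 1/I(f)$.

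Concretely, I would first reduce to an excursion function $f$ on $[0,1]$ and fix $p$ slightly below
$I(f)$. By \cref{thm:comparable} (or directly by the definition of $V^p$ being infinite), for a sequence
of scales $r_j \to 0$ there are $N_{r_j} \ge r_j^{-p}$ disjoint closed $r_j$-balls in $\cT(f)$, which pull
back to $N_{r_j}$ pairwise disjoint ``upcrossing'' intervals $[y_k, x_k]\subset[0,1]$ on which $f$ rises
(or falls) by exactly $r_j$. The crucial first step is a time change that reparametrises $[0,1]$ so that
each of these upcrossing intervals, at scale $r_j$, has Euclidean length comparable to $r_j^{p}$ — this is
exactly the budget allowed by $1/p$-H\"older continuity, since a rise of $r_j$ over a length $r_j^p$ gives
H\"older ratio $r_j / (r_j^p)^{1/p} = 1$. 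One has to do this consistently across all scales $r_j$
simultaneously: the natural device is the arc-length/variation substitution from the proof of
\cref{thm:holdervariation}(2), namely define $h(x) = V^p(f|_{[0,x]})$ and set $\overline\tau$ to be (a
normalisation of) $h^{-1}$; then $g = f\circ\overline\tau$ is automatically $1/p'$-H\"older for $p'$
slightly above $p$, and by construction the pre-images of the scale-$r_j$ upcrossings have length
$\approx r_j^{p}$ (since they each carry variation $\approx r_j^p$, and $h$ is essentially the identity
after rescaling by the total variation restricted near that scale). Making ``$\approx$'' precise, and
handling the fact that $V^p(f) = \infty$ so $h$ must be renormalised scale-by-scale rather than globally,
is the technical heart; one likely wants to run the argument with a fixed $p < I(f)$, obtain
$\ubd\Gamma(g_p) \ge 2 - 1/p$, and then either diagonalise over $p \uparrow I(f)$ to get a single
$\overline\tau$, or — more cleanly — argue that any time change making $g$ exactly $1/p$-H\"older for all
$p>I(f)$ forces $g$ to be $1/I(f)$-anti-H\"older along a sequence of scales, which is enough.

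For the lower box dimension estimate itself, with $g = f\circ\overline\tau$: cover $[0,1]$ by $\approx
r^{-1}$ intervals of length $r$ (taking $r = r_j^{p}$ say) and bound below the number of $r$-mesh squares
meeting $\Gamma(g)$ by $\sum_i \mathrm{osc}_i(g)/r$ where $\mathrm{osc}_i$ is the oscillation of $g$ on the
$i$-th interval; the $N_{r_j}$ upcrossings, now of Euclidean length $\approx r_j^{p} = r$ and height
$r_j = r^{1/p}$, each contribute oscillation $\approx r^{1/p}$ over a single mesh interval, so the mesh-square
count is $\gtrsim N_{r_j}\cdot r^{1/p}/r \gtrsim r_j^{-p}\cdot r^{1/p - 1} = r^{-1}\cdot r^{1/p-1} =
r^{-(2 - 1/p)}$, giving $\ubd\Gamma(g) \ge 2 - 1/p$. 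Letting $p \uparrow I(f)$ finishes the lower bound,
and combined with \cref{thm:uppergraphbound} applied to $g$ (using $I(g) = I(f)$ by time-change invariance
of $\cT$ and Picard) yields $\ubd\Gamma(g) = 2 - 1/I(f) = 2 - 1/\ubd\cT(f)$. The displayed ``max''
reformulation is then immediate: the inequality $1/(2 - \ubd\Gamma(f\circ\tau)) \le \ubd\cT(f)$ holds for
every $\tau$ by \cref{thm:uppergraphbound} and Picard as noted in the text preceding the theorem, and the
maximum is attained at $\kappa = \overline\tau$. The main obstacle I anticipate is the simultaneous /
diagonal control across scales — ensuring one \emph{fixed} homeomorphism $\overline\tau$ makes the graph
rough at \emph{every} scale $r_j$ rather than producing a different time change for each $p$; the
variation-substitution $h^{-1}$ is the right tool, but verifying that it genuinely stretches the
scale-$r_j$ upcrossings to length $\gtrsim r_j^{p}$ (and does not over-stretch the ``flat'' parts, which
would dilute the oscillation count) is where the care is needed.
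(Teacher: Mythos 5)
Your overall strategy is the right one and coincides with the paper's: get the upper bound $\ubd\Gamma(f\circ\overline\tau)\leq 2-1/I(f)$ for free from \cref{thm:uppergraphbound} and time-change invariance of $\cT$, and invest the work in a time change that stretches $r$-upcrossings to Euclidean length $\gtrsim r^p$ so a Minkowski/mesh count gives the matching lower bound. However, the device you propose for building that time change — the variation substitution $\overline\tau = h^{-1}$ with $h(x)=V^p(f|_{[0,x]})$ — is not available in the regime where you need it. For any $p<I(f)$ one has $V^p(f)=\infty$, so $h$ is not a finite increasing function and cannot be normalised into a homeomorphism; that substitution is precisely the tool used in \cref{thm:holdervariation}(2) under the hypothesis $V^p(f)<\infty$, i.e.\ for $p>I(f)$, which is the wrong side. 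You flag this yourself (``$h$ must be renormalised scale-by-scale''), but that remark is the whole difficulty, not a step: a naive truncation/renormalisation of $h$ at each scale produces a different homeomorphism for each scale, and you would still have to glue these into one fixed $\overline\tau$ with uniform lower bounds on the lengths of the stretched upcrossings across infinitely many scales simultaneously — exactly the diagonalisation you identify as ``the main obstacle'' without resolving it.

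The paper's proof replaces the variation substitution with an explicit multi-scale construction of $\phi=\overline\tau^{-1}$ that never refers to $V^p$ directly. One fixes $p=\ubd\cT(f)$ once and for all, chooses a sparse sequence of scales $r_n$ along which $\#D(r_n)\approx r_n^{-p(1-\eps_n)}$ with $\eps_n\downarrow 0$, and imposes scale-separation conditions (in the paper, items \eqref{it:primeBound}--\eqref{it:minDecrease}) guaranteeing both that new upcrossing intervals at level $n$ are plentiful and that $r_n$ decays super-exponentially relative to the previously allocated space. Then $\phi$ is built inductively as a limit of piecewise-linear maps $\phi_n$ that equidistribute the endpoints of the new $D'(r_n)$-intervals inside the gaps left by $\phi_{n-1}$, and the invariant $\diam(\phi_n(J))\geq 4^{-n}r_n^p$ for $J\in D(r_n)$ is proved by induction using those separation conditions. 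This is what lets a single fixed homeomorphism be ``rough at every scale $r_n$.'' The lower bound is then a direct Minkowski-content estimate on $U_{f\circ\overline\tau}(8^{-n}r_n^p)$, with the super-exponential decay of $r_n$ absorbing the $8^{-n}$ loss. So: correct skeleton, but the key missing idea is the explicit nested-grid construction of the time change (and the accompanying scale-selection inequalities); without it the plan does not close. Two further points worth noting: the paper also has to handle the case where $f$ is constant on some subintervals (cf.\ \eqref{eq:integralstretch}), which your plan doesn't address; and the anti-H\"older framing in your write-up is stronger than what one can or needs to prove — the construction only yields lower oscillation bounds along the sparse scale sequence $r_n$, not at all scales, which is why the paper argues via Minkowski content rather than via an anti-H\"older condition.
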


While we are unable to ascertain what notion (if any) in the tree corresponds to
\[
  \min_{\tau\in\Homeo(\bbS^1)}(2-\ubd\Gamma(f\circ \tau))^{-1},
\]
we will show that there exists a
time change for which the modified upper box dimension gives a lower bound.
\begin{corollary}
  \label{thm:modified}
  Let $f\in C(\bbS^1)$ be non-constant. Then there exists a time change
  $\underline\tau\in\Homeo(\bbS^1)$ such that 
  \[
    \ubd \Gamma (f\circ\underline\tau) \geq 2-\frac{1}{\umbd \cT(f)}.
  \]
\end{corollary}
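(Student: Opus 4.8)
The plan is to obtain \cref{thm:modified} as an immediate consequence of \cref{thm:shmerkin}; the only extra ingredient needed is the elementary comparison $\umbd\cT(f)\le\ubd\cT(f)$, so the work is essentially bookkeeping.

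First I would record the monotonicity $\umbd\cT(f)\le\ubd\cT(f)$: this is immediate from the definition of the modified upper box dimension, since the trivial one-element decomposition $\cT(f)=\cT(f)$ is admissible in the defining infimum. As $f$ is non-constant, $\cT(f)$ is a non-degenerate real tree and hence contains an isometrically embedded arc, so both dimensions lie in $[1,\infty]$. Consequently, with the convention $1/\infty=0$ in case $\ubd\cT(f)=\infty$,
\[
  2-\frac{1}{\umbd\cT(f)}\ \le\ 2-\frac{1}{\ubd\cT(f)}.
\]

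Next I would invoke \cref{thm:shmerkin} to produce $\overline\tau\in\Homeo(\bbS^1)$ with $\ubd\Gamma(f\circ\overline\tau)=2-1/\ubd\cT(f)$. Combining this with the displayed inequality yields
\[
  \ubd\Gamma(f\circ\overline\tau)\ =\ 2-\frac{1}{\ubd\cT(f)}\ \ge\ 2-\frac{1}{\umbd\cT(f)},
\]
so the corollary holds with $\underline\tau=\overline\tau$.

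This is the whole argument, so there is no genuine obstacle internal to the corollary itself: all of the difficulty already resides in \cref{thm:shmerkin}, and any attempt at a ``self-contained'' proof would merely re-run (a weaker version of) its zig-zag spreading construction. The one point that does require separate work — and, in my view, the real reason the statement is worth isolating — is the sharpness remark, namely that $\ubd\Gamma(f\circ\tau)\ge 2-1/\umbd\cT(f)$ fails for a general $\tau$ once $\umbd\cT(f)<\ubd\cT(f)$. Establishing this needs a concrete time change $\tau$ for which $\ubd\Gamma(f\circ\tau)$ drops strictly below $2-1/\umbd\cT(f)$; the natural strategy is to take a countable decomposition of $\cT(f)$ into pieces of upper box dimension close to $\umbd\cT(f)$, realise these as trees of disjoint sub-excursions of $f$, and arrange them along the circle so that no sequence of scales sees enough oscillation to push the graph dimension up. The subtle point there is that $\ubd$ of graphs is only finitely stable, so the infinitely many low-dimensional pieces must be laid out with care; I would carry that construction out in \cref{sect:examples} rather than here.
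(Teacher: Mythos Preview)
Your argument is correct: since $\umbd\cT(f)\le\ubd\cT(f)$ and $x\mapsto 2-1/x$ is non-decreasing on $[1,\infty]$, the time change $\overline\tau$ produced by \cref{thm:shmerkin} already witnesses the inequality in \cref{thm:modified}. Nothing more is needed for the statement as written.

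The paper, however, takes a genuinely different route. It fixes an arbitrary countable decomposition $\cT(f)=\bigcup_i T_i$, pulls each piece back to a collection of subintervals $J_i=\pi^{-1}T_i$, applies \cref{thm:shmerkin} separately on each $J_i$ to obtain time changes $\tau_i$ that are the identity off $J_i$, and then composes them to a single homeomorphism $\tau$. Monotonicity of $\ubd$ then gives $\ubd\Gamma(f\circ\tau)\ge 2-1/\ubd T_i$ for every $i$, hence $\ge 2-1/\sup_i\ubd T_i\ge 2-1/\umbd\cT(f)$.

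Your approach is shorter and exposes that, for the bare inequality, the corollary is indeed a one-line consequence of \cref{thm:shmerkin}. The paper's construction buys a finer statement that is not made explicit: for \emph{every} countable decomposition $(T_i)$ of the tree there is a time change whose graph dimension dominates $2-1/\sup_i\ubd T_i$, and this time change is built locally, piece by piece, rather than globally from the full tree. That localisation is the natural precursor to the question you raise at the end (whether one can also push the graph dimension \emph{down} by choosing a decomposition with small $\sup_i\ubd T_i$), and it produces a homeomorphism $\underline\tau$ that is in general distinct from $\overline\tau$. For the corollary as stated, though, your trivial-decomposition shortcut is entirely adequate.
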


\paragraph{Organisation.}
The rest of this Section is organised as follows. We first construct the maximising time change
$\overline \tau$ in \cref{sect:timechange} and then prove that $\ubd\Gamma(f\circ \overline\tau)\geq
2- (\ubd \cT(f))^{-1}$ in \cref{sect:shmerkinproof}.
Finally, in \cref{sect:modproof} we construct the time change under which the modified upper box
dimension is a lower bound, proving \cref{thm:modified}.

\subsection{A maximising time change}
\label{sect:timechange}
In this section we inductively construct a time change $\overline\tau \in \Homeo(\bbS^1)$ for which 
\[
  \ubd\cT(f) = I(f) = \frac{1}{2-\ubd\Gamma(f\circ\overline\tau)}.
\]

Let $p = \ubd\cT(f) $. Let $(\eps_n)_{n=1}^{\infty}$ be a decreasing sequence of positive reals such that $\eps_n \to 0$.
Let 
\[
D(r) = \left\{ [x_i,y_i] \subseteq[0,1] \mid |f(y_i) - f(x_i)| = r; x_i<y_i<x_{i+1}; 0\notin[x_i,y_i] \right\}\]
be the set of intervals with disjoint interior that do not contain $0$ and in which there is an upcrossing or downcrossing.
By convention we will order these subsets by stipulating that $x_i<y_i\leq x_{i+1}$ when referring to elements in $D(r)$.
Since $\overline{I}(f) = p$ by \cref{thm:upperandnormal}, there exists a sequence of $(r_n)_{n=1}^{\infty}$ such that 
$\#D(r_n) = \lfloor r_n ^{-p(1-\eps_n)}\rfloor$.
By taking subsequences $k_n$ if necessary, we may assume without loss of generality that
\begin{enumerate}[(i)]
  \item\label{it:primeBound} $r_{k_{n+1}}^{-p(1-\eps_{n+1})} \geq \# D(r_{k_{n+1}}) - \# D(r_{k_n}) \geq r_{k_{n+1}}^{-p(1-2\eps_{n+1})}>2$,
  \item\label{it:iterBound} $r_{k_{n+1}}^{2\eps_{n+1}} \leq r_{k_n}$,
  \item\label{it:minDecrease} $r_{k_n} \leq \exp(-k_n^2)$.
\end{enumerate}
We shall relabel $r_{k_n}$ by $r_n$ to avoid unnecessarily cumbersome notation.

\medskip 
We construct $\overline\tau$ by first constructing its inverse $\phi$ as the pointwise limit of homemorphisms $\phi_n$, which are defined on an increasing sequence of discrete sets $F_n\subset [0,1]$ that contain the endpoints of intervals in $D(r_j)$ for $j\leq n$. Further, we will show that 
\begin{equation}\label{eq:inductiveHyp}
  \diam(\phi_n(J))\geq \tfrac1{4^n} r_n^p
  \qquad
  \text{for all $J\in D(r_n)$.}
\end{equation}

Assume for the time being that $f$ is nowhere constant. 
Let $\phi_0 = \id$, $F_0=\{0\}$ and
\[
  F_1 = F_0 \cup \left\{ x_i \mid [x_i,y_i]\in D(r_1) \right\}\cup\left\{ y_i \mid [x_i,y_i]\in
  D(r_1) \right\}.
\]
Recall that we index elements of finite subsets of $\bbS^1$ with respect to the ordering $<$ on
$[0,1)$.
We set
\(
  \phi_1(z_i) = \frac{i}{\#D(r_1)+1}
\)
for all $z_i\in F_1\setminus F_0$ and extend $\phi_1$ to $F_1$ by letting $\phi_1|_{F_0} = \phi_0|_{F_0}$.
Having defined $\phi_1|_{F_1}$, we define $\phi_1$ to linearly interpolate for all intermediate values,
that is for $z\in [x_i,y_i] \in D(r_1)$, we set $\phi_1(z) = \frac{\phi_1(y_i)-\phi_1(x_i)}{y_i-x_i} (z - x_i) + \phi_1(x_i)$, for $z\in[y_i,x_{i+1}]$ we set $\phi_1(z) = \frac{\phi_1(x_{i+1})-\phi_1(y_i)}{x_{i+1}-y_i} (z - y_i) + \phi_1(y_{i})$ and similarly for $z\in[0,x_1]$ and $z\in[y_n,1]$ where $y_n = \max F_1\setminus F_0$.
Note that $\phi_1([x_i,y_i])$ are intervals of diameter
\[
  \diam(\phi_1([x_i,y_i])) \geq \frac{1}{2\#D(r_1)+1} \geq \tfrac14 r_1^{p(1-\eps_1)}\geq\tfrac14 r_1^p
\]
which establishes \eqref{eq:inductiveHyp} for $n=1$.
\medskip

Having defined $\phi_{n-1}$ and $F_{n-1}$, we set 
\[
  F_n = F_{n-1} \cup \left\{ x_i \mid [x_i,y_i] \in D'(r_n) \right\}  \cup \left\{ y_i \mid [x_i,y_i]\in D'(r_n) \right\}
\]
where $D'(r_n) = \left\{ [x_i, y_i]\in D(r_n) \mid [x_i,y_i] \cap F_{n-1} = \varnothing \right\}$.
Now $\#D'(r_n) \leq \#D(r_n) \leq r_{n}^{-p(1-\eps_n)}$
and by \eqref{it:primeBound},
$\#D'(r_n)\geq \#D(r_n)-\#D(r_{n-1}) \geq r_{n}^{-p(1-2\eps_n)}$.
Now set $\phi_n|_{F_{n-1}} = \phi_{n-1}|_{F_{n-1}}$ and equidistribute images of $F_{n}\setminus F_{n-1}$
within each interval $(a_i, a_{i+1})$, where $a_i,a_{i+1}\in \phi(F_{n-1})$ and $0=a_0 < a_1 < \dots < 1$.
Note that \eqref{it:iterBound} implies $r_{n-1}^{p}r_{n}^{-2p\eps_n} \geq 1$. 
Thus, \eqref{it:primeBound}, \eqref{it:iterBound}, and the inductive hypothesis
\eqref{eq:inductiveHyp} for $n-1$ gives, for elements $[x_i,y_i]\in D'(r_n)$,
\begin{align} 
  \diam(\phi_n([x_i,y_{i}])) 
  &\geq \frac{\min_{i\neq j} |a_{i}-a_j|}{2\#D'(r_n)+1}
  \geq \tfrac14 r_n^{p(1-2\eps_n)} \cdot \min_{i\neq j} |a_{i}-a_j|
  \nonumber\\
  &\geq \tfrac14 r_n^{p(1-2\eps_n)}\tfrac{1}{4^{n-1}} r_{n-1}^p
  = \tfrac{1}{4^n} r_n^p  r_{n-1}^p r_n^{-2p\eps_n}
  \geq \tfrac{1}{4^n} r_n^p.
  \label{eq:lengthBound}
\end{align}
This shows \eqref{eq:inductiveHyp} for $n$ and completes the induction step.

\medskip
To construct $\phi$, we first note that $\phi_n|_{F_k} = \phi_k|_{F_k}$ for all $k\geq n$.
Write $F = \bigcup_{k\in\N}F_k$.
For all $x\in \bigcup_{k\in\N} F_k$ we set $\phi(x) = \phi_m(x)$ where $m$ is the minimal integer such that $x\in F_m$.
It immediately follows that $\phi$ is non-decreasing on $F$.
Since $f$ is nowhere constant, for any $[a,b]\subset \bbS^1$ there exists large enough $n$ such that there exists $[x_i,y_i]\in D'(r_n)$ contained in $[a,b]$. Hence, $F$ is a countable dense subset of $\bbS^1$.
Thus, setting $\phi(x) = \lim_{\substack{y\searrow x , y\in F}}\phi(y)$ for $x\in \bbS^1\setminus F$ and using monotonicity, $\phi$ is a well-defined cadlag function on $\bbS^1$.
By denseness of $F$ in $\bbS^1$, we further see that $\phi$ is (strictly) increasing on $[0,1)$, since for any $a,b\in[0,1)$ there exists $n$ large and an interval $[x_i,y_i]\in D'(r_n)$ contained in $[a,b]$ and
\[
  \phi(a) \leq \phi(x_i) = \phi_n(x_i) < \phi_n(y_i) = \phi(y_i) \leq \phi(b).
\]
Finally, we show that $\phi$ is continuous.
To do so, we first establish that $\phi(F)$ is dense in $\bbS^1$. 
Assume for a contradiction that $\phi(F)$ is not dense.
Then there exists an interval $[a,b]\subset[0,1)$ of length $\delta=b-a$ such that
for all $\eps>0$, we have $[a,b]\cap \phi(F) = \varnothing$, $[a-\eps,a]\cap \phi(F) \neq \varnothing$
and $[b,b+\eps]\cap \phi(F) \neq \varnothing$.
Let $\eps<\tfrac14 \delta$ and choose $n$ large enough such that there exist consecutive points $z_i,z_{i+1}\in\phi(F_n)$
with $z_i \in [a-\eps,a]$ and $z_{i+1}\in[b,b+\eps]$. 
Since $F$ is dense there must exist a minimal $k>n$ and an interval
$[x_j,y_j]\in D'(r_k)$ with $\phi([x_j,y_j]) \subset (z_i,z_{i+1})$.
Thus, since $\phi_k$ evenly spreads points, the set $F_k \cap [z_i, z_{i+1}]$ has at least four elements separated by at most $(z_{i+1}-z_i)/3 \leq (\delta+2\tfrac{\delta}{4}) /3 = \tfrac12 \delta $.
But then $[a,b]\cap F_k$ is non-empty, a contradiction.

Since $\phi:\bbS^1\to\bbS^1$ is a strictly increasing bijection from a dense subset of $\bbS^1$ onto a dense subset of $\bbS^1$, it is continuous and hence a homeomorphism. 
Defining $\overline\tau = \phi^{-1}$ gives a mapping $\overline\tau\in\Homeo(\bbS^1)$ that we will use as a time-change.

We may avoid the assumption that $f$ is nowhere constant by realising that for non-constant $f$, the
construction above is well-defined on the closure $\cl(F)$ and $[0,1)\setminus\cl(F)$ is either
empty (and $f$ is nowhere constant) or is a countable union of disjoint intervals $J_i$.
We may then define
\begin{equation}\label{eq:integralstretch}
  \phi_0(x) = \tfrac12 \phi(x) + \frac{1}
  {2\cL^1([0,1)\setminus\cl(F))} \int_0^x \mathbf{1}_{[0,1)\setminus\cl(F)}(y)dy
\end{equation}
which can likewise be shown to be a homeomorphism on $\bbS^1$ that satisfies \eqref{eq:lengthBound}
up to another multiplicative constant of $\tfrac12$.

\subsection{Proof of Theorem \ref{thm:shmerkin}}
\label{sect:shmerkinproof}
Recall that the upper $s$-Minkowski content of $X\subset \bbS^1\times\R$
is given by 
\[
  \overline{\mathcal{M}}^s (X) = \limsup_{r\to 0}r^{s-2} \cL^2(\langle X \rangle_r),
\]
where $\cL^2$ is the $2$-dimensional Lebesgue measure.
The upper box dimension may be estimated from below by finding $t>0$ such that
$\overline{\mathcal{M}}^t (X) > 0$.
If $X=\Gamma(g)$ is the graph of a continuous function $g:\R\to\bbS^1$ we can bound $\langle X
\rangle_r$ from below by 
\[
  U(r) = U_g(r)= \bigcup_{x\in\bbS^1} (x-r,x+r)\times \{g(x)\}.
\]
That is,
if $\limsup_{r\to 0}r^{t-2} \cL^2(U_g(r)) > 0$, then $\ubd \Gamma (g) \geq t$.

\medskip
Consider $A(r_n) = \phi(D'(r_n))$ for $\phi$ and $D'$ as in the last section. 
The collection $A(r_n)$ consists of at least $r_n^{-p(1-2\eps_{n})}$ intervals with disjoint
interior, each of length at least $\tfrac18 r_n^p$, by \eqref{eq:lengthBound} and the comment after
\eqref{eq:integralstretch}.
Notice further that for all $[a_i,b_i]\in A(r_n)$, we get $f\circ\overline\tau([a_i,b_i]) = f([x_i,y_i])$ for some $[x_i,y_i]\in D'(r_n)$ and thus $|f\circ\overline\tau(a_i) - f\circ\overline\tau(b_i)| = r_n$.
Hence, 
\[
  \cL^2(U_{f\circ\overline\tau}(\tfrac{1}{8^n} r_n^p)) \geq \sum_{[a_i,b_i]\in A(r_n)} \tfrac{1}{8^n}
  r_n^p \cdot r_n
  \geq
  r_n^{-p(1-2\eps_{n})}\cdot\tfrac{1}{8^n} r_n^p \cdot r_n
  =\tfrac{1}{8^n} r_n^{1+2p\eps_n}.
\]
Let $0<\delta<p-1$ be arbitrary. 
We obtain, for $s = 2-\tfrac{1+\delta}{p}>1$,
\begin{align*}
  \limsup_{\rho\to 0}\rho^{s-2} \cL^2(U_g(\rho))
  &\geq
  \limsup_{n\to\infty} (\tfrac{1}{8^n} r_n^{p})^{s-2}\tfrac{1}{8^n} r_n^{1+2p\eps_n}
  \\&=
  \limsup_{n\to\infty}(\tfrac{1}{8^n})^{s-1}\cdot r_{n}^{ps-2p+1+2p\eps_n}
  \\&=
  \limsup_{n\to\infty} e^{-\alpha n} \cdot r_{n}^{2p\eps_n-\delta}
  &&\text{for some $\alpha>0$}
  \\&\geq
  \limsup_{n\to\infty} e^{-\alpha n} \cdot r_{n}^{-\delta/2}
  \\&\geq
  \limsup_{n\to\infty} e^{-\alpha n} \cdot e^{(\delta/2) n^2}
  &&\text{by \eqref{it:minDecrease}}
  \\
  & =\quad \infty
\end{align*}
and hence $\ubd \Gamma(f\circ\overline\tau) \geq 2-\tfrac1p-\tfrac\delta p$ for all $\delta>0$.
The conclusion to \cref{thm:shmerkin} follows immediately upon application of the general upper
bound in \cref{thm:uppergraphbound}. \qed


\subsection{Proof of Corollary \ref{thm:modified}}
\label{sect:modproof}
The proof follows as a consequence from the exact time changes in \cref{thm:shmerkin}.
Let $T_i$ be a countable decomposition of $\cT(f)$ into disjoint sets.
Each preimage $\pi^{-1}T_i = J_i$ corresponds to a countable union of disjoint intervals which may
be open, half-open, or closed. Without loss of generality we may assume that all $T_i$ are
connected. Since every restriction to $J_i$ corresponds to a tree, we have
$\ubd \Gamma(f|_{J_i}) \leq 2-(\ubd \cT(f|_{J_i}))^{-1}$.
By \cref{thm:shmerkin} there exists a time change $\tau_i$ such that $\tau_i|_{\bbS^1\setminus
J_i}=\id$ and 
\[
  \ubd \Gamma(f\circ \tau_i|_{J_i}) = 2-\frac{1}{\ubd \cT(f|_{J_i})} = 2-\frac{1}{\ubd T_i}.
\]
Since $\tau_i$ fixes endpoints, the composition $\tau = \tau_1\circ\tau_2\circ\dots$
is a homeomorphism on $\bbS^1$.
By monotonicity of the box dimension we have
\[
  \ubd \Gamma(f\circ\tau) \geq
  \ubd \Gamma(f\circ\tau|_{J_i}) = 2-\frac{1}{\ubd T_i}
\]
for all $i$ and so
\[
  \ubd \Gamma(f\circ\tau)\geq \sup_i \left( 2-\frac{1}{\ubd T_i} \right)
  =2-\frac{1}{\sup_i \ubd T_i}
  \geq 2-\frac{1}{\umbd \cT(f)}
\]
as $\umbd\cT(f) \leq \sup_i \ubd T_i$. \qed

\section{Examples and Counterexamples}
\label{sect:examples}
\subsection{Value of upper variation content}
We will first create a general set up with which we give several examples of functions.
Let $f(x)$ be the ``zig-zag'' function consisting of $K_n$ peaks of height $h_n$ in the interval
$[2^{n},2^{n-1}]$. That is, for all integers $0\leq k \leq 2K_n$,
\[
  f(2^{-n}+\tfrac{j}{2K_n}2^{-n}) = \begin{cases}
    0 & k \text{ is even,}\\
    h_n & k\text{ is odd.}
  \end{cases}
\]
and interpolating linearly in-between.
\begin{example}
  Let $f$ be as above. Let $p>1$, $h_n=2^{-n}$, and $K_n = 2^{pn}$. Then,
  $\overline{V}^p(f)=\tfrac{2^{1+p}}{2^p-2}$ and $V^p(f)=\infty$.
\end{example}
We first calculate $V_r^p(f)$ for $r=2^{-k}$.
Its value is given by 
\[
  V_{2^{-k}}^p(f) = \sum_{n=1}^k 2 K_n \frac{2^{-n}}{2^{-k}}\cdot 2^{-pk}
  = 2^{1+(1-p)k} \sum_{n=1}^k 2^{pn}2^{-n}
  =\frac{2^{1+p}}{2^p-1}.
\]
It is readily seen that $V_r^p(f)$ cannot exceed this value for any $r>2^{-k}$ and hence
$\overline{V}^p(f) = \tfrac{2^{1+p}}{2^p-2}$.
For the $p$-variation, we can count terms individually up to some size $2^{-k}$. We get
\[
  V^p(f) \geq \sum_{n=1}^k 2 K_n 2^{-pn} = 2\sum_{n=1}^{k} 1 = 2k
\]
for any $k\in\N$. Hence $V^p(f) = \infty$.

\bibliographystyle{alpha}
\setlength{\parskip}{-2pt}
\footnotesize
\bibliography{lib}{}

\end{document}